\documentclass{amsart}%
\usepackage{amsfonts}
\usepackage{amsmath}
\usepackage{amssymb}
\usepackage{graphicx}%
\setcounter{MaxMatrixCols}{30}
\newtheorem{theorem}{Theorem}[section]
\theoremstyle{plain}

\newtheorem{corollary}[theorem]{Corollary}

\newtheorem{definition}[theorem]{Definition}
\newtheorem{example}[theorem]{Example}

\newtheorem{lemma}[theorem]{Lemma}

\newtheorem{proposition}[theorem]{Proposition}
\newtheorem{remark}[theorem]{Remark}

\numberwithin{equation}{section}
\begin{document}
	\title[Poles of the Non-Archimedean Zeta Functions for Rational Functions]{Poles of Non-Archimedean Zeta Functions for Non-degenerate Rational Functions}
	\author{M. Bocardo-Gaspar}
	\address{CUCEI, Departamento de
		Matemáticas, Blvd. Gral. Marcelino García Barragán $\#$ 1421, Guadalajara, Jal., C.P. 44430, México}
	\email{miriam.bocardo@academicos.udg.mx}

	\thanks{}
	\subjclass[2000]{Primary 14G10, 11S40; Secondary 52B20, 14M25, 32A20}
	\keywords{Igusa local zeta functions, multivariate stationary phase formula, Newton polyhedra,
		non-degeneracy conditions, poles, meromomorphic functions.}
	
	\begin{abstract}
		In this article, we study local zeta functions over non-Archimedean locals fields of arbitrary characteristic attached to rational functions and characters $\chi$ of the units of the ring of integers $\mathcal{O}_{K}$, by using an approach based on the multivariate $\pi$-adic stationary phase formula and Newton polyhedra. When the rational function is non-degenerate with respect to its Newton polyhedron, we give an explicit formula for the local zeta function and a list of the possible poles in terms of the normal vectors of the supporting hyperplanes of the Newton polyhedron attached to the rational function and their expected multiplicities. Furthermore, we obtain some conditions under which the local zeta function attached to the trivial character has at least one real pole by describing the largest negative real pole and the smallest positive one.
	\end{abstract}
	\maketitle

	\section{Introduction}

	Let $K$ be a non-Archimedean local
	field of arbitrary characteristic. By a well-known classification theorem, a
	non-Archimedean local field is a finite extension of $\mathbb{Q}_{p}$, the
	field of $p$-adic numbers, or the field of formal Laurent series
	$\mathbb{F}_{q}\left(  \left(  T\right)  \right)  $ over a finite field
	$\mathbb{F}_{q}$. For further details the reader may consult \cite[Chapter 1]{WeilB}. We denote by $\mathcal{O}_{K}$ the ring of integers of $K$ and let $\mathbb{F}_{q}$ be the residue field of
	$K$, the finite field with $q=p^{m}$ elements, where $p$
	is a prime number. For $z\in K\smallsetminus\left\{  0\right\}  $, let
	$\nu(z)\in\mathbb{Z}\cup\left\{  +\infty\right\}  $ denote \textit{the
		valuation} of $z$, let $|z|_{K}=q^{-\nu(z)}$ denote the normalized
	\textit{absolute value} (or \textit{norm}), \ and let $ac(z)=z\pi^{-\nu(z)}$
	denote the \textit{angular component}, where $\pi$ is a fixed uniformizing
	parameter of $K$. We extend the norm $|\cdot|_{K}$ to $K^{n}$ by taking
	$||(x_{1},\ldots,x_{n})||_{K}:=\max\left\{  |x_{1}|_{K},\ldots,|x_{n}%
	|_{K}\right\}  $. Then $(K^{n},||\cdot||_{K})$ is a complete metric space and
	the metric topology is equal to the product topology.

	Let $\mathcal{O}_{K}^{\times}$ be the multiplicative group of
	$\mathcal{O}_{K}$. A \textit{character} of $\mathcal{O}_{K}^{\times}$ is a
	continuous homomorphism $\chi:\mathcal{O}_{K}^{\times}\rightarrow{\mathbb{S}%
	}^{1}$ where ${\mathbb{S}}^{1}$ is the circle in $\mathbb{C}$ considered as a
	multiplicative group. We will call the characters of $\mathcal{O}_{K}^{\times
	}$ \textit{the multiplicative characters of} $K$, because given any character
	$\chi$ of $\mathcal{O}_{K}^{\times}$, the mapping $x\rightarrow\chi\left(
	ac\left(  x\right)  \right)  $ gives rise to a character of the multiplicative
	group $K^{\times}$ of $K$. We formally put $\chi(0)=0$. We will denote by $\chi_{_{triv}}$ the trivial
	character of $\mathcal{O}_{K}^{\times}$. Since
	$\mathcal{O}_{K}^{\times}$ is a totally disconnected, compact, abelian group,
	there exists $e\in$ $\mathbb{N}$ such that $\chi|_{1+\pi^{e}\mathcal{O}_{K}%
	}=1$. The smallest positive integer satisfying this condition $e=e_{\chi}$ is
	called the \textit{conductor of the character} $\chi$. Notice that $e_{\chi_{triv}}=1$ and 
	$e_{\chi}=e_{\chi^{-1}}$.
	
	Let $f,g\in \mathcal{O}_{K}[x_{1},\ldots, x_{n}]\backslash\pi\mathcal{O}_{K}[x_{1},\ldots, x_{n}]$ be two non-constant co-prime polynomials, $n\geq 2$.
	The \textit{twisted local zeta function associated to a rational function $f/g$ and a character $\chi$} is defined as 
	\begin{equation}
	Z\left(s,\chi,\frac{f}{g}\right)=\int\limits_{{\mathcal{O}}_{K}^{n}\backslash D_{K}}%
	\chi\left(  ac\left(  \frac{f(x)}{g(x)}\right)  \right)  \left\vert
	\frac{f(x)}{g(x)}\right\vert ^{s}|d\boldsymbol{x}|; \label{Integral_1}%
	\end{equation}
	where $D_{K}=\{x\in\mathcal{O}_{K}:f(x)=0\}\cup \{x\in\mathcal{O}_{K}:g(x)=0\}$ and $|d\boldsymbol{x}|_{K}$ denotes
	the Haar measure on $\left(  K^{n},+\right)  $, normalized such that the measure
	of $\mathcal{O}_{K}^{n}$ is equal to one. If $\chi=\chi_{triv}$ we use the notation $Z\left(s,\frac{f}{g}\right)$ instead of  $Z\left(s,\chi_{triv},\frac{f}{g}\right)$.
	
	These local zeta functions were introduced in \cite{V-Z2} by Veys and Z\'{u}\~{n}iga-Galindo. They extended Igusa's
	theory \cite{Igusa}, to the case of meromorphic
	functions $f/g$, with coefficients in a local field of characteristic zero. In  \cite{V-Z2}, by using resolution of singularities, it was stablished that the real parts of the poles are contained in the set of ratios $\left\{\frac{-v_{i}}{N_{i}}\right\}\cup\left\{-1,1\right\}$ where $\left\{(N_{i},v_{i})\right\}$ are the numerical data of an embedded resolution of singularities of the divisor $f^{-1}(0)\cup g^{-1}(0)$. However in the case of positive characteristic, this method cannot be applied. In this article we use a technique based on Newton polyhedra and a generalized $\pi$-adic stationary phase formula to give an explicit formula and the possible poles of  $Z\left(s,\chi,\frac{f}{g}\right)$ when $K$ has arbitrary characteristic and $f/g$ is non-degenerate with respect to certain Newton polyhedron. The local zeta functions attached to a class of non-degenerate rational functions (Laurent polynomials) were first studied in \cite{Leon} and \cite{Leon2}. The notion of non-degeneracy used here allows us to study the twisted local zeta functions attached to much larger class of rational functions.

	\begin{definition}
Let  $h(x_{1},\ldots,x_{n})=h(\boldsymbol{x})=\sum
		_{\boldsymbol{m}}c_{\boldsymbol{m}}\boldsymbol{x}^{\boldsymbol{m}}$ be a non constant polynomial with coefficients in $\mathcal{O}_{K}$ with
		 $h(\boldsymbol{0})=\boldsymbol{0}$ and $\mathrm{supp}(h)$ $:=$ $\left\{  \boldsymbol{m}%
		\in\mathbb{N}^{n};c_{\boldsymbol{m}}\neq0\right\}$. We define the \textit{Newton polyhedron }%
		$\Gamma=\Gamma\left( h\right)  $ of $h$ as the convex hull  of the set $$\bigcup_{\boldsymbol{m}\in supp(h)}\left(
		\boldsymbol{m}+\mathbb{R}_{+}^{n}\right)$$ in 		$\mathbb{R}_{+}^{n}:=\{x\in\mathbb{R};x\geqslant0\}$.
	\end{definition}
Every proper face of $\Gamma$ is the intersection of $\Gamma$ with a supporting hyperplane. For $\boldsymbol{k}\in\mathbb{R}_{+}^{n}$, we define  \textit{the first meet locus }$F(\boldsymbol{k},\Gamma)$ of
	$\boldsymbol{k}$ as \
	\[
	F(\boldsymbol{k},\Gamma):=\{\boldsymbol{x}\in\Gamma;\left\langle
	\boldsymbol{k},\boldsymbol{x}\right\rangle =d(\boldsymbol{k},\Gamma)\}.
	\] where 
	\[
	d(\boldsymbol{k},\Gamma)=\min_{x\in\Gamma}\left\langle \boldsymbol{k}%
	,\boldsymbol{x}\right\rangle.
	\]
The \textit{face function
	}$h_{\boldsymbol{k}}\left(  \boldsymbol{x}\right)  $\textit{\ of
	}$h(\boldsymbol{x})$\textit{\ with respect to }$\boldsymbol{k}$ is defined as 
	\[
	h_{\boldsymbol{k}}\left(  \boldsymbol{x}\right)  =\sum_{\boldsymbol{m}\in F(\boldsymbol{k}%
		,\Gamma)}c_{\boldsymbol{m}}\boldsymbol{x}^{\boldsymbol{m}}.
	\]
	For functions with subindices $h_{i}(\boldsymbol{x})$, we	will use the notation $h_{i,\boldsymbol{k}}(\boldsymbol{x})$ for the face
	function of $h_{i}(\boldsymbol{x})$\ with respect to $\boldsymbol{k}$.

Let $h_{i}$ be a non-constant polynomial with coefficients in $\mathcal{O}_{K}$ for $i=1,\ldots ,r$ , $r\leq n$.  We say that a polynomial mapping  $\boldsymbol{h}=(h_{1},\ldots,h_{r})$, with $\boldsymbol{h}\left(  \boldsymbol{0}\right)  =0$ is \textit{non-degenerate over} $\mathbb{F}_{q}$ \textit{ with respect to the Newton polyhedron} $\Gamma\left(
\boldsymbol{h}\right):=\Gamma\left(
{\textstyle\prod\nolimits_{i=1}^{r}}
h_{i}\left(  \boldsymbol{x}\right)  \right)$,  if for every vector $\boldsymbol{k}\in\mathbb{R}_{+}^{n}$ and for
		any non-empty subset $I\subseteq\left\{  1,\ldots,r\right\}  $, it verifies
		that
		\begin{equation}
		rank_{\mathbb{F}_{q}}\left[  \frac{\partial\overline{h}_{i,\boldsymbol{k}}%
		}{\partial x_{j}}\left(  \overline{\boldsymbol{z}}\right)  \right]  _{i\in
			I,\text{ }j\in\left\{  1,\ldots,n\right\}  }=Card(I) \label{Condition_2}%
		\end{equation}
		for any
		\begin{equation}
		\overline{\boldsymbol{z}}\in\left\{  \overline{\boldsymbol{z}}\in\left(
		\mathbb{F}_{q}^{\times}\right)  ^{n};\overline{h}_{i,\boldsymbol{k}}%
		(\overline{\boldsymbol{z}})=0\Leftrightarrow i\in I\right\},
		\label{Condition_1}%
		\end{equation}
		where $\overline{\cdot}$ denotes the reduction module $\pi\mathcal{O}_{K}$.
		
		 It is sufficient to verify the condition (\ref{Condition_2}) for $\boldsymbol{k=}b(\Delta)$ for all $\Delta
		\in\mathcal{F}(\boldsymbol{h})\cup\left\{  \boldsymbol{0}\right\}$, where $\mathcal{F}(\boldsymbol{h})$ is a simplicial polyhedral subdivision subordinate to $\Gamma(\boldsymbol{h})$ and $b(\Delta)$ is the \textit{barycenter} of $\Delta$. For further details on Newton polyhedra and polyhedral subdivisions the reader may consult  \cite[Section 3]{B-Z-0}, \cite{K-M-S}, \cite{K}, and \cite{Sturmfels}. 
\begin{definition}
	Let $f(x_{1},\ldots,x_{n}), g(x_{1},\ldots,x_{n})\in{\mathcal{O}}_{K}[x_{1},\ldots,x_{n}]\backslash\pi{\mathcal{O}}_{K}[x_{1},\ldots,x_{n}]$ two co-prime polynomials, $n\geq 2$. We say that the rational function $\frac{f}{g}$ is non-degenerate over $\mathbb{F}_{q}$ with respect to $\Gamma\left(\frac{f}{g}\right)$ if and only if the polynomial mapping $(f,g):K^{n}%
	\rightarrow K^{2}$ is non-degenerate over $\mathbb{F}_{q}$ with
	respect to $\Gamma\left(f,g\right)$.
\end{definition}	

In \cite{B-Z-0} an explicit formula for $Z\left(s,\frac{f}{g}\right)$ was stablished in the case for which $f/g$ is non-degenerate over $\mathbb{F}_{q}$ with respect to $\Gamma\left(\frac{f}{g}\right)$.
		In this work, we extend this result to the case of $Z\left(s,\chi, \frac{f}{g}\right)$ for any character $\chi$, see Theorem \ref{Theorem2}. Furthermore, we give the upper and lower bounds for the possible  negative and positive real parts of the poles in terms of $(t_{0},\ldots, t_{0})$, the intersection point of the diagonal with the boundary of the Newton polyhedron of $f/g$, see Propositions \ref{cotasup}, \ref{cotainf}. In the classical case, when $\chi=\chi_{triv}$, it is well known that the largest real negative pole different from $-1$ is $-1/t_{0}$ if $t_{0}>1$ and it implies that $Z(s,f)$ has always a real pole, see \cite{Hoo1}. In this case, it  remains true if  $(t_{0},\ldots, t_{0})$ is the intersection point with the boundary of the Newton polyhedron of $f$, see Theorem \ref{Diagonal1}. Similarly, in Theorem \ref{Diagonal2}, we prove that the smallest real positive pole different from $1$ is $1/t_{0}$ if $t_{0}>1$ and $(t_{0},\ldots, t_{0})$ is the intersection point of the diagonal with the boundary of the Newton polyhedron of $g$.  In Section \ref{Sect_7}, we show that, under certain conditions, the local zeta functions for non-degenerate rational functions attached  to the trivial character have a real pole, by describing the largest and the smallest negative and positive real poles, respectively. In addition, we get some conditions  on $\Gamma(f)$ and $\Gamma(g)$ that imply the existence of poles of $Z\left(s,\frac{f}{g}\right)$ and another ones for which the local zeta function does not have poles, see Remark \ref{Char}.

	\section{\label{Sect_3} Multivariate stationary phase formula}
	
	In this section we will stablish a generalization of the $\pi$-adic stationary phase formula. In the classical case it was introduced by Igusa in \cite{Igusa1} and it is very useful to compute local zeta functions for several types of polynomials, see e. g. \cite{Leon2}, \cite{W-Z}.  The  generalization introduced here coincides with the $\pi$-adic stationary phase formula in the case of one polynomial and one character.  In order to stablish this generalization we will first  compute some required integrals.

	\begin{lemma}
		\label{Lemma_igusa} Let $a\in K$, $c\in\mathbb{Z}$, $s\in\mathbb{C}$ with
		$\operatorname{Re}\left(  s\right)  >0$, $N\in\mathbb{Z}\backslash\left\{
		0\right\}  $ and $n\in{\mathbb{N}}\backslash\left\{  0\right\}  $, $\chi$ a
		character of ${\mathcal{O}}_{K}^{\times}$.Then
		\begin{align*}
		&  \int\limits_{a+\pi^{c}{\mathcal{O}}_{K}\backslash\left\{  0\right\}  }%
		\chi^{N}(ac\left(  x\right)  )\left\vert x\right\vert _{K}^{Ns+n-1}|dx|_{K}\\
		&  =\left\{
		\begin{array}
		[c]{lll}%
		\frac{(1-q^{-1})q^{-cNs-cn}}{(1-q^{-Ns-n}} &  & \text{$a\in\pi^{c}%
			{\mathcal{O}}_{K}$, $\chi^{N}=1$},\\
		&  & \\
		q^{-c}\chi^{N}(ac\left(  a\right)  )|a|_{K}^{Ns+n-1} &  &
		\begin{array}
		[c]{l}%
		a\notin\text{$\pi^{c}{\mathcal{O}}_{K}$ and}\\
		\text{ $\chi^{N}|_{U^{\prime}}=1$},%
		\end{array}
		\\
		&  & \\
		0 &  & \text{all other cases,}%
		\end{array}
		\right.
		\end{align*}
		in which $U^{\prime}=1+\pi^{c}a^{-1}{\mathcal{O}}_{K}$, and $|dx|_{K}$ denotes de Haar measure on $(K,+)$.
	\end{lemma}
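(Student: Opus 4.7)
The plan is to handle the three cases separately by splitting the integration domain either into spheres $\{x\in K:\nu(x)=c+k\}$ for $k\geq 0$ or by directly exploiting that the ball $a+\pi^c\mathcal{O}_K$ sits in a single sphere; in both situations the integrand factors once we identify a subgroup on which $\chi^N$ is either trivial or not.

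First, suppose $a\in\pi^c\mathcal{O}_K$, so that $a+\pi^c\mathcal{O}_K=\pi^c\mathcal{O}_K$. I would decompose
\[
\pi^c\mathcal{O}_K\setminus\{0\}=\bigsqcup_{k\geq 0}\pi^{c+k}\mathcal{O}_K^{\times},
\]
change variables $x=\pi^{c+k}u$ with $u\in\mathcal{O}_K^{\times}$ (so $|dx|_K=q^{-(c+k)}|du|_K$, $|x|_K=q^{-(c+k)}$, and $ac(x)=u$), and reduce each piece to $q^{-(c+k)(Ns+n)}\int_{\mathcal{O}_K^{\times}}\chi^N(u)|du|_K$. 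When $\chi^N=1$, summing the resulting geometric series in $q^{-Ns-n}$ gives the first branch of the formula. When $\chi^N\neq 1$, the inner integral vanishes because $\chi^N$ is a non-trivial character of the compact group $\mathcal{O}_K^{\times}$, yielding $0$ and thus the third branch in this sub-case.

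Next, suppose $a\notin\pi^c\mathcal{O}_K$, i.e.\ $d:=\nu(a)<c$. For every $x=a+\pi^c y$ with $y\in\mathcal{O}_K$ the ultrametric inequality gives $\nu(x)=d$, so $|x|_K=|a|_K$ is constant on the ball. Writing $ac(x)=ac(a)\bigl(1+\pi^{c-d}ac(a)^{-1}y\bigr)$ and noting $\pi^{c-d}ac(a)^{-1}\mathcal{O}_K=\pi^c a^{-1}\mathcal{O}_K$, I see that the map $x\mapsto ac(x)/ac(a)$ sends $a+\pi^c\mathcal{O}_K$ onto $U'=1+\pi^c a^{-1}\mathcal{O}_K$, preserving Haar measure up to a scalar. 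Thus
\[
\int_{a+\pi^c\mathcal{O}_K}\chi^N(ac(x))|x|_K^{Ns+n-1}|dx|_K
=\chi^N(ac(a))|a|_K^{Ns+n-1}\,\mathrm{vol}(a+\pi^c\mathcal{O}_K)\cdot\frac{1}{\mathrm{vol}(U')}\int_{U'}\chi^N(u)|du|_K.
\]
If $\chi^N|_{U'}=1$ the last integral equals $\mathrm{vol}(U')$, and since $\mathrm{vol}(a+\pi^c\mathcal{O}_K)=q^{-c}$ I recover the second branch; otherwise $\chi^N$ is a non-trivial continuous character of the compact abelian group $U'$, so the integral is zero, giving the third branch.

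The only genuinely delicate step is the second case: one must verify that $x\mapsto ac(x)/ac(a)$ is a well-defined surjection onto $U'$ and correctly identify the conductor-like subgroup $U'=1+\pi^c a^{-1}\mathcal{O}_K$ so that the dichotomy ``$\chi^N|_{U'}=1$ or not'' is the precise criterion; everything else is routine $p$-adic bookkeeping (disjoint shell decomposition, geometric summation, and vanishing of integrals of non-trivial characters over compact groups).
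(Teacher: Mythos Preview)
Your proposal is correct and follows precisely the standard argument that the paper has in mind: the paper does not give its own proof but simply says ``the proof of the lemma is a variation of the one given for Lemma 8.2.1 in \cite{Igusa}'', and your shell decomposition in the case $a\in\pi^{c}\mathcal{O}_{K}$ together with the constancy of $|x|_{K}$ and the reduction to a character sum over $U'$ in the case $a\notin\pi^{c}\mathcal{O}_{K}$ is exactly that argument. Your identification $\pi^{c}a^{-1}\mathcal{O}_{K}=\pi^{c-d}\mathcal{O}_{K}$ (with $d=\nu(a)$) and the observation that $|du|_{K}$ is invariant under multiplication by units, so that the non-trivial-character integral over $U'$ vanishes, are the only points requiring care, and you handle them correctly.
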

	
	The proof of the lemma is a variation of the one given for Lemma 8.2.1 in
	\cite{Igusa}.\\
	
Let $n\geq 2$, $r\leq n$, and $\boldsymbol{h}=(h_{1},\ldots,h_{r}):K^{n}\rightarrow K^{r}$ be a polynomial mapping such that each $h_{i}%
	(\boldsymbol{x})\in\mathcal{O}_{K}[x_{1},\ldots,x_{n}]\backslash\pi
	\mathcal{O}_{K}[x_{1},\ldots,x_{n}]$.  The polynomial mapping $\overline{\boldsymbol{h}}:=(\overline{h}_{1},\ldots,\overline{h}_{r})$ denotes  the reduction module $\pi$ of the coefficients of all the polynomial components $h_{i}$.
	
	\begin{lemma}\label{Lemma1}
		Let $I=\{1,2,...,l\}$ with $l\leq r$,  $\boldsymbol{h}_{I}=(h_{1}(\boldsymbol{x}),h_{2}(\boldsymbol{x}),\ldots,h_{l}(\boldsymbol{x}))$, and $c\in\mathbb{N}\backslash\{0\}$ Suppose $\boldsymbol{a}\in\mathcal{O}_{K}^{n}$ such that $\overline{\boldsymbol{h}}_{I}(\overline{\boldsymbol{a}})=0$ and the Jacobian  matrix $Jac(\overline{\boldsymbol{h}}_{I},\overline{\boldsymbol{a}})$ has rank $l$, and that $\chi_{i}\neq \chi_{triv}$ for some $i\in I$. Then \[\int\limits_{\substack{{\boldsymbol{a}+(\pi^{c}\mathcal{O}}_{K})^{n}\backslash D_{K}}}\prod_{i=1}^{l}\chi_{i}(ac(h_{i}(\boldsymbol{x})))|h_{i}(\boldsymbol{x})|_{K}^{s_{i}}|d\boldsymbol{x}|_{K}=0\]
	\end{lemma}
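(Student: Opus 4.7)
The plan is to straighten the polynomials $h_1,\dots,h_l$ into coordinate functions on the ball $\boldsymbol{a}+(\pi^c\mathcal{O}_K)^n$ by an analytic change of variables, factor the resulting integral by Fubini, and use Lemma~\ref{Lemma_igusa} to see that any factor attached to a non-trivial character is zero.

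By the rank hypothesis some $l\times l$ minor of $Jac(\overline{\boldsymbol{h}}_I,\overline{\boldsymbol{a}})$ is non-zero in $\mathbb{F}_q$; after relabeling the variables, take it to be $M(\boldsymbol{x})=\det\bigl[\partial h_i/\partial x_j(\boldsymbol{x})\bigr]_{1\leq i,j\leq l}$. Then $M(\boldsymbol{a})\in\mathcal{O}_K^{\times}$, and since $M$ is polynomial and $\boldsymbol{x}\equiv\boldsymbol{a}\pmod{\pi^c}$ on the ball, the ultrametric inequality forces $|M(\boldsymbol{x})|_K=1$ throughout $\boldsymbol{a}+(\pi^c\mathcal{O}_K)^n$. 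I would then define
\[
\phi(\boldsymbol{x})=\bigl(h_1(\boldsymbol{x}),\ldots,h_l(\boldsymbol{x}),\,x_{l+1},\ldots,x_n\bigr)
\]
and invoke the $p$-adic implicit function theorem, via the Taylor expansion
\[
h_i(\boldsymbol{a}+\pi^c\boldsymbol{z})=h_i(\boldsymbol{a})+\pi^c\sum_j\partial_jh_i(\boldsymbol{a})\,z_j+O(\pi^{2c}),
\]
combined with $|M(\boldsymbol{a})|_K=1$, to conclude that $\phi$ is a bianalytic bijection from $\boldsymbol{a}+(\pi^c\mathcal{O}_K)^n$ onto $\phi(\boldsymbol{a})+(\pi^c\mathcal{O}_K)^n$ with Haar-measure-preserving Jacobian.

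Because $\overline{h}_i(\overline{\boldsymbol{a}})=0$ forces $h_i(\boldsymbol{a})\in\pi\mathcal{O}_K$ for every $i\in I$, the pull-back of $D_K$ is precisely $\{y_1\cdots y_l=0\}$, and Fubini applied to the transformed integral yields
\[
q^{-c(n-l)}\prod_{i=1}^{l}\int_{h_i(\boldsymbol{a})+\pi^c\mathcal{O}_K\setminus\{0\}}\chi_i(ac(y))\,|y|_K^{s_i}\,|dy|_K.
\]
For any $i_0\in I$ with $\chi_{i_0}\neq\chi_{triv}$, I would apply Lemma~\ref{Lemma_igusa} (with $N=1$, $n=1$, $a=h_{i_0}(\boldsymbol{a})$) to the $i_0$-th factor: the two non-vanishing cases of that lemma both require $\chi_{i_0}$ to be trivial on a distinguished subgroup (all of $\mathcal{O}_K^{\times}$ in the first case, and $1+\pi^c h_{i_0}(\boldsymbol{a})^{-1}\mathcal{O}_K$ in the second), which fails by hypothesis, so the factor is $0$ and the whole product vanishes.

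I expect the technical heart of the argument to be the verification in the previous paragraph that $\phi$ is a bijection of the \emph{entire} closed ball onto its image with unit Jacobian throughout, rather than only a local diffeomorphism near $\boldsymbol{a}$; this is the standard but delicate ultrametric implicit-function computation. A secondary subtlety lies in the final appeal to Lemma~\ref{Lemma_igusa}: in the sub-case $h_{i_0}(\boldsymbol{a})\notin\pi^c\mathcal{O}_K$ one must confirm that $\chi_{i_0}$ genuinely acts non-trivially on $1+\pi^{c-v(h_{i_0}(\boldsymbol{a}))}\mathcal{O}_K$, which ties the choice of $c$ to the conductors of the characters that arise in the stationary-phase framework where this lemma is deployed.
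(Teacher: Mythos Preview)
Your approach is essentially the same as the paper's. The paper first translates and rescales the ball to $\mathcal{O}_K^n$ and then takes
\[
y_i=\frac{h_i(\boldsymbol{a}+\pi^{c}\boldsymbol{x})-h_i(\boldsymbol{a})}{\pi^{c}}\quad(1\le i\le l),\qquad y_i=x_i\quad(l<i\le n),
\]
citing \cite[Lemma~7.4.3]{Igusa} for the fact that this is a measure-preserving bijection of $\mathcal{O}_K^n$; after that it simply invokes Lemma~\ref{Lemma_igusa}. Your map $\phi=(h_1,\dots,h_l,x_{l+1},\dots,x_n)$ is the same change of variables without the preliminary normalization, and your appeal to the ultrametric implicit function theorem is exactly what underlies Igusa's lemma.

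Your closing remark is perceptive and worth making explicit. In the case $1\le v(h_{i_0}(\boldsymbol{a}))<c$, Lemma~\ref{Lemma_igusa} only forces the $i_0$-th factor to vanish when $\chi_{i_0}$ is nontrivial on $1+\pi^{\,c-v(h_{i_0}(\boldsymbol{a}))}\mathcal{O}_K$, i.e.\ when $e_{\chi_{i_0}}>c-v(h_{i_0}(\boldsymbol{a}))$; the paper's terse proof does not comment on this. In the paper the lemma is only invoked with $c=e$, the common conductor of all the $\chi_i$, so $c-v(h_{i_0}(\boldsymbol{a}))\le e-1<e_{\chi_{i_0}}$ and the issue does not arise; your reading of how the ambient stationary-phase framework resolves the subtlety is correct.
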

	\begin{proof}
		By using that rank of $Jac(\overline{\boldsymbol{h}}_{I},\overline{\boldsymbol{a}%
		})$ is $l$, then the result follows from Lemma \ref{Lemma_igusa} by applying the change of variables 
		\begin{center}
			\begin{tabular}{ccccc}
				$\phi:$&$\mathcal{O}_{K}^{n}$&$\rightarrow $&$\mathcal{O}_{K}^{n}$\\
				&$(x_{1},\ldots,x_{n})$&$\mapsto$&$(y_{1},\ldots,y_{n})$&
			\end{tabular}
		\end{center}
		with
		\[
		\boldsymbol{y}_{i}=\phi_{i}(\boldsymbol{x}):=\left\{
		\begin{array}
		[c]{lll}%
		\frac{h_{i}\left(  \boldsymbol{a}+\pi^{c}\boldsymbol{x}\right)  -h_{i}\left(
			\boldsymbol{a}\right)  }{\pi^{c}} & \text{if} & i=1,\ldots,l,\\
		&  & \\
		x_{i} & \text{if} & l+1\leq i\leq n ,
		\end{array}
		\right.
		\]
		which gives a measure-preserving map from ${\mathcal{O}}_{K}^{n}$
		to itself, see  \cite[Lemma 7.4.3]{Igusa}.
	\end{proof}

	\subsection{$\pi$-adic stationary phase formula}
	Let $\overline{E}\subseteq \mathbb{F}_{q}^{n}$ and $E$ its preimage under the canonical homomorphism ${\mathcal{O}}^{n}_{K}\rightarrow \left({\mathcal{O}}_{K}/P_{K}\right)^{n}\cong \mathbb{F}_{q}^{n}$ and $\boldsymbol{h}=(h_{1},\ldots,h_{n})$ as above. For every subset $I\subseteq T:=\{1,2,\ldots, n\}$, we set
	 	\begin{equation}
	 	\overline{V}_{I}:=\left\{ \overline{\boldsymbol{z}}\in \mathbb{F}_{q}^{n};\ 
	 	\overline{h}_{i}(\overline{\boldsymbol{z}})=0\Leftrightarrow i\in I\right\} .
	 	\label{Notation}
	 	\end{equation}
	 	and for $I\neq \varnothing$ we define the singular locus of $\overline{\boldsymbol{h}}_{I}$ as the set 
	 	$$Sing_{\overline{\boldsymbol{h}}_{I}}({\mathbb{F}}%
	 	_{q})=\{\overline{z}\in	 	\overline{V}_{I}; Jac(\overline{\boldsymbol{h}}_{I},\overline{z}) \text{ has no maximal rank} \}.$$
	 	Let $S:=S(\boldsymbol{h}, E)$ be the subset of $\mathcal{O}_{K}^{n}$ mapped bijectively to the set $\bigcup_{\substack{ I\subseteq
	 			T  \\ I\neq \varnothing }}Sing_{\overline{\boldsymbol{h}}_{I}}({\mathbb{F}}%
	 	_{q})\cap \overline{E}.$ Furthermore, we set  $\boldsymbol{\chi}:=(\chi_{1},\ldots,\chi_{r})$, where $\chi_{i}$ is a multiplicative character of $\mathcal{O}_{K}^{\times
	 	}$. For our purpose we use two types of $\boldsymbol{\chi}$,  $\boldsymbol{\chi}_{triv}:=\left(  \chi_{triv},\ldots,\chi_{triv}\right)  $ and $\boldsymbol{\chi}=(\chi_{1},\ldots,\chi_{r})$ with  $\chi_{i}\neq\chi_{triv}$ and conductor $e_{\chi_{i}}=e$ for all $i\in T$, where $e$ is a fixed non-zero natural number. To refer to the second type we will write $\boldsymbol{\chi}\neq \boldsymbol{\chi}_{triv}$.  
Now,  we set 
	 	\begin{equation*}
	 	\nu (\overline{\boldsymbol{h}},E,\boldsymbol{\chi}):=\left\{
	 	\begin{array}
	 	[c]{lll}%
	 	q^{-n}Card\left\{ \overline{a}\in \overline{E}%
	 	|\overline{h_{i}}(\overline{a})\neq 0, i\in T\right\} . & \text{if} & \boldsymbol{\chi}=\boldsymbol{\chi}_{triv},\\
	 	&  & \\
	 	\sum\limits_{\substack{a\in E\\ \overline{\boldsymbol{a}}\mod\pi^{e}\\\overline{h}_{i}(\overline{\boldsymbol{a}})\neq 0, i\in T}} q^{-ne}\prod_{i=1}^{r}\chi_{i}(h_{i}(\boldsymbol{a})) &
	 	\text{if} & \boldsymbol{\chi}\neq \boldsymbol{\chi}_{triv},\\
	 	\end{array}
	 	\right.
	 	\end{equation*}
	 	 and for every non-empty subset $I\subseteq T$, we set 
	 	\begin{equation*}
	 	\sigma(\overline{\boldsymbol{h}}_{I},E,\boldsymbol{\chi}):=\left\{
	 	\begin{array}
	 	[c]{lll}%
	 	q^{-n}Card\left\{ \text{$\overline{a}%
	 		\in \overline{E}|$ $\overline{a}\in $}\overline{V}_{I},\text{$\overline{a}%
	 		\notin Sing_{\overline{\mathbf{h}}_{I}}({\mathbb{F}}_{q})$.}\right\} & \text{if} & \boldsymbol{\chi}=\boldsymbol{\chi}_{triv},\\
	 	&  & \\
0 &
	 	\text{if} & \boldsymbol{\chi}\neq \boldsymbol{\chi}_{triv}.\\
	 	\end{array}
	 	\right.
	 	\end{equation*}

	 Let  $\boldsymbol{s}=(s_{1},\ldots,s_{r})\in\mathbb{C}^{r},\ \boldsymbol{\chi}$,  and $\boldsymbol{h}$ as before.  We define
	 \[
	 Z_{E}(\boldsymbol{s},\boldsymbol{\chi},\boldsymbol{h})=\int\limits_{E}\prod_{i=1}^{r}\chi_{i}\left(
	 ac\left(  h_{i}(\boldsymbol{x})\right)  \right)  \left\vert h_{i}%
	 (\boldsymbol{x})\right\vert _{K}^{s_{i}}|d\boldsymbol{x}|_{K}.
	 \]
	 We write $Z(\boldsymbol{s},\boldsymbol{\chi},\boldsymbol{h})$ when $E=\mathcal{O}_{K}^{n}$. 
	 
	 The next result is a generalization of the $\pi$-adic stationary phase formula. 

\begin{theorem}
	\label{formula_estacionaria} With the preceding notation, 
	\begin{equation*}
	Z_{E}(\boldsymbol{s},\boldsymbol{\chi}, \boldsymbol{h})=\nu (\overline{\boldsymbol{h}},E, \boldsymbol{\chi})+\sum_{\substack{ I\subseteq T\\ I\neq\varnothing}}%
	\sigma (\overline{\boldsymbol{h}}_{I},E, \boldsymbol{\chi})\prod_{i\in I}\frac{(q-1)q^{-1-s_{i}}}{%
		1-q^{-1-s_{i}}}+Z_{S}(\boldsymbol{s},\boldsymbol{\chi}, \boldsymbol{h})
	\end{equation*}
\end{theorem}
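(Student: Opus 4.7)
My plan is to decompose $E$ according to the residue modulo $\pi$ and evaluate each piece via a change of variables. Since $E$ is the preimage of $\overline{E}$, one has the disjoint union $E = \bigsqcup_{\overline{a} \in \overline{E}}(a + \pi \mathcal{O}_K^n)$, and each $\overline{a}$ lies in exactly one $\overline{V}_I$. For $I \neq \varnothing$ such a point either belongs to $Sing_{\overline{\boldsymbol{h}}_I}(\mathbb{F}_q)$, in which case the ball $a + \pi \mathcal{O}_K^n$ contributes to $Z_S(\boldsymbol{s}, \boldsymbol{\chi}, \boldsymbol{h})$ by definition of $S$, or it is a regular point of $\overline{\boldsymbol{h}}_I$, to be handled by the main computation. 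It therefore suffices to show that the balls over $\overline{V}_\varnothing$ sum to $\nu(\overline{\boldsymbol{h}}, E, \boldsymbol{\chi})$ and that, for non-empty $I$, the balls over non-singular points of $\overline{V}_I$ sum to $\sigma(\overline{\boldsymbol{h}}_I, E, \boldsymbol{\chi}) \prod_{i \in I} \frac{(q-1)q^{-1-s_i}}{1 - q^{-1-s_i}}$.

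For $\overline{a} \in \overline{V}_\varnothing \cap \overline{E}$, every $|h_i(x)|_K = 1$ on the ball $a + \pi \mathcal{O}_K^n$, so $|h_i(x)|_K^{s_i} = 1$. In the trivial character case the integrand is identically $1$ and each ball contributes $q^{-n}$, yielding the first formula for $\nu$. For $\boldsymbol{\chi} \neq \boldsymbol{\chi}_{triv}$, I further partition into balls $a + \pi^e \mathcal{O}_K^n$; on each, $h_i(x) \equiv h_i(a) \pmod{\pi^e}$, so by the conductor-$e$ property $\chi_i(ac(h_i(x))) = \chi_i(h_i(a))$ is constant, and the ball contributes $q^{-ne}\prod_i \chi_i(h_i(a))$, matching the second formula for $\nu$.

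For non-singular $\overline{a} \in \overline{V}_I \cap \overline{E}$ with $I \neq \varnothing$, I scale $x = a + \pi z$ (producing the factor $q^{-n}$) and apply the Igusa-type change of variables from the proof of Lemma \ref{Lemma1}, using that some $|I|$ columns of $Jac(\overline{\boldsymbol{h}}_I, \overline{a})$ are invertible. After a permutation of the $x$-coordinates if necessary, this yields a measure-preserving $\phi$ on $\mathcal{O}_K^n$ with $y_i = (h_i(a+\pi z) - h_i(a))/\pi$ for $i \in I$ and $y_j = z_j$ otherwise. Writing $h_i(a) = \pi w_i$ with $w_i \in \mathcal{O}_K$ (possibly zero) for $i \in I$, one has $h_i(a+\pi z) = \pi(w_i + y_i)$, while $|h_i(a + \pi z)|_K = 1$ for $i \notin I$. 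In the trivial character case the integrand decouples and, after the translation $y_i \mapsto y_i - w_i$ on $\mathcal{O}_K$, Lemma \ref{Lemma_igusa} evaluates each factor to give
\begin{equation*}
q^{-n} \prod_{i \in I} q^{-s_i} \int_{\mathcal{O}_K}|y_i|_K^{s_i}\,|dy_i|_K = q^{-n} \prod_{i \in I} \frac{(q-1)q^{-1-s_i}}{1 - q^{-1-s_i}}
\end{equation*}
per ball, and summing over non-singular $\overline{a}$ produces the desired $\sigma$-term. In the case $\boldsymbol{\chi} \neq \boldsymbol{\chi}_{triv}$, I refine each ball into balls $a' + \pi^e \mathcal{O}_K^n$; the factors for $i \notin I$ become constants as before, while the remaining integral $\int \prod_{i \in I} \chi_i(ac(h_i))|h_i|_K^{s_i}\,|dx|_K$ vanishes by Lemma \ref{Lemma1} (some $\chi_i$, $i \in I$, being non-trivial by assumption), matching $\sigma(\overline{\boldsymbol{h}}_I, E, \boldsymbol{\chi}) = 0$.

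The main obstacle is the bookkeeping around the Igusa change of variables: one must verify that selecting an invertible submatrix of columns (possibly after a coordinate permutation) and translating by $w_i$ are compatible with both the Haar measure and the character values. Both points are standard and already implicit in Lemma \ref{Lemma1} and in Igusa's Lemma 7.4.3, so once the three-way decomposition has been set up, the rest of the argument is a direct combinatorial accounting.
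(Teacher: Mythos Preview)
Your proposal is correct and follows essentially the same approach as the paper: decompose $E$ into cosets, separate the singular locus, and handle the pieces over $\overline{V}_\varnothing$ and over the non-singular points of each $\overline{V}_I$ via the Igusa change of variables (Lemma~\ref{Lemma1}/Lemma~\ref{Lemma_igusa}). The only organizational difference is that the paper partitions $E\setminus S$ into cosets modulo $\pi^{e}$ from the outset (which reduces to mod $\pi$ when $\boldsymbol{\chi}=\boldsymbol{\chi}_{triv}$ since $e_{\chi_{triv}}=1$), whereas you start mod $\pi$ and refine to mod $\pi^{e}$ only in the non-trivial case; and the paper cites Lemma~1 of \cite{B-Z-0} for the trivial-character integral rather than carrying out the computation explicitly as you do.
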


\begin{proof}
	
	Since $E\backslash S$	and $S$ form a partition of $E$, 
	\begin{displaymath}
	Z_{E}(\boldsymbol{s},\boldsymbol{\chi}, \boldsymbol{h})=Z_{E\backslash S}(\boldsymbol{s}, \boldsymbol{\chi},\boldsymbol{h})+Z_{S}(\boldsymbol{s},\boldsymbol{\chi},\boldsymbol{h}).
	\end{displaymath}
	Now, notice that $Z_{E\backslash S}(\boldsymbol{s},\boldsymbol{\chi},\boldsymbol{h})$ can be expressed as a finite sum of
	integrals%
	\begin{equation}
\int\limits_{\substack{ 
			\boldsymbol{a}+(\pi^{e} {\mathcal{O}}_{K})^{n}}}%
	\prod_{i=1}^{r}\chi_{i}(ac(h_{i}(\boldsymbol{x})))|h_{i}(\boldsymbol{x})|_{K}^{s_{i}}|d\boldsymbol{x}|_{K},
	\label{Int_1}
	\end{equation}%
	where $\boldsymbol{a}$ runs through a fixed set of representatives $\mathcal{%
		R}$ of $E\backslash S \bmod \pi^{e}.$ Then $Z_{E\backslash S}(\mathbf{s}, \boldsymbol{\chi}, \mathbf{h})$ equals%
	\begin{gather}\label{Sum_1}
	\sum_{\substack{ \boldsymbol{a}\in \mathcal{R}  \\ \overline{\boldsymbol{a}}%
			\in \overline{V}_{\varnothing }}}\int\limits_{\boldsymbol{a}+(\pi^{e} {\mathcal{O%
		}}_{K})^{n}}\prod_{i=1}^{r}\chi_{i}(ac(h_{i}(\boldsymbol{x})))|h_{i}(\boldsymbol{x}%
	)|_{K}^{s_{i}}|d\boldsymbol{x}|_{K} \\
	+\sum_{\substack{ I\subseteq \left\{ 1,\ldots ,r\right\}  \\ I\neq
			\varnothing }}\sum_{\substack{ \boldsymbol{a}\in \mathcal{R}\text{, }%
			\overline{\boldsymbol{a}}\in \overline{V}_{I}  \\ \overline{\boldsymbol{a}}%
			\notin Sing_{\overline{\mathbf{h}}_{I}}({\mathbb{F}}_{q})}}\hspace{0.2cm}%
	\int\limits_{\boldsymbol{a}+(\pi^{e}{\mathcal{O}}_{K})^{n}}\prod_{i=1}^{r}\chi_{i}(ac(h_{i}(\boldsymbol{x})))|h_{i}(\boldsymbol{x})|_{K}^{s_{i}}|d\boldsymbol{x}|_{K}\nonumber
	\end{gather}
	With the convention that if $\overline{V}_{I}=\varnothing $ for $I\subseteq
	\left\{ 1,\ldots ,r\right\} $, then $\sum_{\substack{ \boldsymbol{a}\in 
			\mathcal{R} \\ \overline{\boldsymbol{a}}\in \overline{V}_{I}}}%
	\int\limits_{_{\substack{ \boldsymbol{a}+(\pi {\mathcal{O}}%
				_{K})^{n}}}}\cdot =0$. Thus we may assume that $\overline{V}%
	_{I}\neq \varnothing $.
	Now, if $\overline{\boldsymbol{a}}\in \overline{V}_{\varnothing}$, then 
	\begin{align*}
\int\limits_{\substack{ 
		\boldsymbol{a}+(\pi^{e} {\mathcal{O}}_{K})^{n}}}%
\prod_{i=1}^{r}\chi_{i}(ac(h_{i}(\boldsymbol{x})))|h_{i}(\boldsymbol{x})|_{K}^{s_{i}}|d\boldsymbol{x}|_{K}&= \int\limits_{\substack{ 
		\boldsymbol{a}+(\pi^{e} {\mathcal{O}}_{K})^{n}}}%
\prod_{i=1}^{r}\chi_{i}(ac(h_{i}(\boldsymbol{x})))|d\boldsymbol{x}|_{K}\\
&=q^{-ne}\int\limits_{\substack{ 
		 {\mathcal{O}}_{K}^{n}}}%
\prod_{i=1}^{r}\chi_{i}(ac(h_{i}(\boldsymbol{a}+\pi^{e}x)))|d\boldsymbol{x}|_{K}\\
&= 
q^{-ne}\prod_{i=1}^{r}\chi_{i}(h_{i}(\boldsymbol{a})),
\end{align*}
the last equality follows from the fact that $h_{i}(\boldsymbol{a})\in\mathcal{O}^{\times}_{K}$ and $e(\chi_{i})=e$ for all $i=1,\ldots,r$ and by applying Taylor's formula we obtain 
	$$\chi_{i}(ac(h_{i}(\boldsymbol{a}+\pi^{e}\boldsymbol{x})))=\chi_{i}(h_{i}(\boldsymbol{a})),$$ thus the first sum in (\ref{Sum_1}) equals $\nu(\overline{\boldsymbol{h}},E,\boldsymbol{\chi}).$
	
	Let $I$ be a non-empty subset of $\{1,2,\ldots, r\}$ and  $\overline{\boldsymbol{a}}\in \overline{V}_{I}$ such that $\overline{\boldsymbol{a}} \notin Sing_{\overline{\mathbf{h}}_{I}}(\mathbb{F}_{q})$. If $I=\{1,2,\ldots, r\}$, then by Lemma 1 in \cite{B-Z-0} and Lemma \ref{Lemma1} 
		\begin{equation}\label{In_1}
	\int\limits_{\substack{ 
			\boldsymbol{a}+(\pi^{e} {\mathcal{O}}_{K})^{n}}}%
	\prod_{i=1}^{r}\chi_{i}(ac(h_{i}(\boldsymbol{x})))|h_{i}(\boldsymbol{x})|_{K}^{s_{i}}|d\boldsymbol{x}|_{K}=\left\{
	\begin{array}
	[c]{lll}%
	0 & \text{if} & \boldsymbol{\chi}\neq\boldsymbol{\chi}_{triv},\\
	&  & \\
	q^{-ne}\prod\limits_{i=1}^{r}\frac{(q-1) q^{-1-s_{i}}}{{1-q^{-1-s_{i}}}} &
	\text{if} & \boldsymbol{\chi}=\boldsymbol{\chi}_{triv}.
	\end{array}
	\right.
	\end{equation}
	
	Now, we assume that $I\neq \{1,2,\ldots, r\}$, as above we can notice that  $$\chi_{i}(ac(h_{i}(\boldsymbol{a}+\pi^{e}\boldsymbol{x})))=\chi_{i}(h_{i}(\boldsymbol{a})) \text{ for $i\notin I$}.$$ Then $\int\limits_{\substack{ 
			\boldsymbol{a}+(\pi^{e} {\mathcal{O}}_{K})^{n}}}%
	\prod_{i=1}^{r}\chi_{i}(ac(h_{i}(\boldsymbol{x})))|h_{i}(\boldsymbol{x})|_{K}^{s_{i}}|d\boldsymbol{x}|_{K}$ equals 
		\begin{equation}\label{In_2}
 q^{-ne}\prod_{i\notin I}\chi_{i}(h_{i}(\boldsymbol{a})) \int\limits_{\substack{ 
			{\mathcal{O}}_{K}^{n}}}
	\prod_{i\in I}\chi_{i}(ac(h_{i}(\boldsymbol{a}+\pi^{e}\boldsymbol{x})))|h_{i}(\boldsymbol{a}+\pi^{e}\boldsymbol{x})|_{K}^{s_{i}}|d\boldsymbol{x}|_{K}.
	\end{equation}
	Then by using that $\overline{\boldsymbol{a}}\in \overline{V}_{I}$ with  $\overline{\boldsymbol{a}} \notin Sing_{\overline{\boldsymbol{h}}_{I}}(\mathbb{F}_{q})$ and Lemma 1 in \cite{B-Z-0} and Lemma \ref{Lemma1} it follows that 
\begin{equation}\label{In_3}
\int\limits_{\substack{ 
		{\mathcal{O}}_{K}^{n}}}%
\prod_{i\in I}\chi_{i}(ac(h_{i}(\boldsymbol{a}+\pi^{e}\boldsymbol{x})))|h_{i}(\boldsymbol{a}+\pi^{e}\boldsymbol{x})|_{K}^{s_{i}}|d\boldsymbol{x}|_{K}=\left\{
\begin{array}
[c]{lll}%
0 & \text{if} & \boldsymbol{\chi}\neq\boldsymbol{\chi}_{triv},\\
&  & \\
\prod\limits_{i\in I}\frac{(q-1) q^{-1-s_{i}}}{{1-q^{-1-s_{i}}}} &
\text{if} & \boldsymbol{\chi}=\boldsymbol{\chi}_{triv}.
\end{array}
\right.
\end{equation}
	Thus by equations (\ref{In_1}), (\ref{In_2}), and (\ref{In_3}), it follows that the second sum of (\ref{Sum_1}) equals 
	\begin{equation*}
\sum_{\substack{ I\subseteq \left\{ 1,\ldots ,r\right\}  \\ I\neq
		\varnothing }}\sigma(\overline{\boldsymbol{h}}_{I},E,\boldsymbol{\chi})\prod_{i\in I}\frac{(q-1)q^{-1-s_{i}}}{1-q^{-1-s_{i}}}. 
	\end{equation*}
\end{proof}
\begin{remark}
The multivariate stationary phase formula given in Theorem \ref{formula_estacionaria} can be generalized in a similar way to the case of characters with different conductors or even combinations of trivial and non-trivial  characters  by taking  $e=\max\{e_{\chi_{i}}\}$. We do not include it here because that generalization is not necessary in this work.
\end{remark}

	\section{\label{Sect_5} Explicit formula  for local zeta functions attached to rational functions}
In order to give an explicit formula for twisted local zeta functions attached to rational functions, we extend the result given in  \cite[Theorem 1]{B-Z-0} to the case of multivariate twisted local zeta functions  $Z(\boldsymbol{s},\boldsymbol{\chi}, \boldsymbol{h})$ by using the multivariate stationary phase formula stablished in the last section. In addition, the following theorem  extends the result given in \cite[Theorem 3.4]{Hoo1} by Hoornaert.

	\begin{theorem}
		\label{Theorem1} Let $e\in\mathbb{N}\backslash\{0\}$ and $r\leq n$. Assume that $\boldsymbol{h}=(h_{1},\ldots,h_{r})$ is 
		non-degenerated polynomial mapping over ${\mathbb{F}}_{q}$ with respect to
		$\Gamma(\boldsymbol{h})$, and $\boldsymbol{\chi}=(\chi_{1},\ldots,\chi_{r})$ with $\boldsymbol{\chi}=\boldsymbol{\chi_{_{triv}}}$ or $\chi_{i}\neq \chi_{triv}$ and $e(\chi_{i})=e$ for $i=1,\ldots, r$. Fix a simplicial
		polyhedral subdivision $\mathcal{F}(\boldsymbol{h})$ subordinate to
		$\Gamma(\boldsymbol{h})$ and let $\boldsymbol{w}_{1},\ldots,\allowbreak\boldsymbol{w}_{l}\in
		{\mathbb{N}}^{n}\backslash\left\{  \mathbf{0}\right\}$ be the generators of a cone $\Delta\in \mathcal{F}(\boldsymbol{h})$. Then $Z(\mathbf{s},\boldsymbol{\chi},\boldsymbol{h}%
		)$ has a meromorphic continuation to $\mathbb{C}^{r}$ as a rational function
		in the variables $q^{-s_{i}}$, $i=1,\ldots,r$. In addition, the following explicit formula holds:
		\[
		Z(\boldsymbol{s},\boldsymbol{\chi},\boldsymbol{h})=\sum
		_{\substack{\text{$\Delta\in\mathcal{F}(\boldsymbol{h})\cup \{\boldsymbol{0}\}$}}}L_{\Delta
		}(\boldsymbol{s},\boldsymbol{\chi},\boldsymbol{h})S_{\Delta},
		\]
		where 
				\[
		L_{\Delta}(\boldsymbol{s},\boldsymbol{\chi},\boldsymbol{h})=\nu (\overline{\mathbf{h}}_{\Delta},(\mathcal{O}_{K}^{\times})^{n}, \boldsymbol{\chi})+\sum_{\substack{ I\subseteq T\\ I\neq\varnothing}}%
		\sigma (\overline{\mathbf{h}}_{I,\Delta},(\mathcal{O}_{K}^{\times})^{n}, \boldsymbol{\chi})\prod_{i\in I}\frac{(q-1)q^{-1-s_{i}}}{%
			1-q^{-1-s_{i}}},
		\]	
 
	 $S_{\{\boldsymbol{0}\}}=1$, and 
		\[
		S_{\Delta}=\frac{\sum_{\boldsymbol{t}}q^{-\sigma(\boldsymbol{t})-\sum
				_{i=1}^{r}d(\boldsymbol{t},\Gamma(h_{i}))s_{i}}}{(1-q^{-\sigma(\boldsymbol{w}%
				_{1})-\sum_{i=1}^{r}d(\boldsymbol{w}_{1},\Gamma(h_{i}))s_{i}})\cdots
			(1-q^{-\sigma(\boldsymbol{w}_{l})-\sum_{i=1}^{r}d(\boldsymbol{w}_{l}%
				,\Gamma(h_{i}))s_{i}})},
		\]
		with $\boldsymbol{t}$ runs through the elements of the set
		\begin{equation}\label{Subset_1}
		{\mathbb{Z}}^{n}\cap\left\{  \sum_{i=1}^{l}\lambda_{i}\boldsymbol{w}%
		_{i};\text{ $0<\lambda_{i}\leq1$ for $i=1,\ldots,l$}\right\} .
		\end{equation}
	\end{theorem}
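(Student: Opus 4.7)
The plan is to mimic the approach of \cite[Theorem 1]{B-Z-0} and \cite[Theorem 3.4]{Hoo1}, replacing the classical $\pi$-adic stationary phase formula by the multivariate version established in Theorem \ref{formula_estacionaria}. First, I would decompose the domain of integration by the valuation vector. Writing each $x_{i}\in\mathcal{O}_{K}\setminus\{0\}$ uniquely as $x_{i}=\pi^{a_{i}}u_{i}$ with $a_{i}\in\mathbb{N}$, $u_{i}\in\mathcal{O}_{K}^{\times}$, I partition $\mathcal{O}_{K}^{n}$ (up to a null set where some $x_{i}=0$) according to $\boldsymbol{a}=(\nu(x_{1}),\ldots,\nu(x_{n}))\in\mathbb{N}^{n}$. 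Since $\mathcal{F}(\boldsymbol{h})$ is a subdivision of $\mathbb{R}_{+}^{n}$, each such $\boldsymbol{a}$ lies in the relative interior of a unique cone $\Delta$. The point $\boldsymbol{a}=\boldsymbol{0}$ is handled by the zero cone $\{\boldsymbol{0}\}$. This yields
\[
Z(\boldsymbol{s},\boldsymbol{\chi},\boldsymbol{h})=\sum_{\Delta\in\mathcal{F}(\boldsymbol{h})\cup\{\boldsymbol{0}\}}\,\sum_{\boldsymbol{a}\in\mathbb{N}^{n}\cap\mathrm{relint}(\Delta)}I_{\boldsymbol{a}}.
\]

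Next, on each piece I apply the change of variables $x_{i}=\pi^{a_{i}}u_{i}$, $u_{i}\in\mathcal{O}_{K}^{\times}$, which rescales Haar measure by $q^{-\sigma(\boldsymbol{a})}$. A monomial $c_{\boldsymbol{m}}^{(i)}\boldsymbol{x}^{\boldsymbol{m}}$ becomes $c_{\boldsymbol{m}}^{(i)}\pi^{\langle\boldsymbol{a},\boldsymbol{m}\rangle}\boldsymbol{u}^{\boldsymbol{m}}$, so factoring out $\pi^{d(\boldsymbol{a},\Gamma(h_{i}))}$ gives $h_{i}(x)=\pi^{d(\boldsymbol{a},\Gamma(h_{i}))}\tilde{h}_{i}(\boldsymbol{u})$ with $\tilde{h}_{i}(\boldsymbol{u})\equiv h_{i,\boldsymbol{a}}(\boldsymbol{u})\pmod{\pi}$, because only the exponents $\boldsymbol{m}\in F(\boldsymbol{a},\Gamma(h_{i}))$ contribute $\pi^{0}$. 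Therefore $|h_{i}(x)|_{K}^{s_{i}}=q^{-d(\boldsymbol{a},\Gamma(h_{i}))s_{i}}|\tilde{h}_{i}(\boldsymbol{u})|_{K}^{s_{i}}$ and $\chi_{i}(ac(h_{i}(x)))=\chi_{i}(ac(\tilde{h}_{i}(\boldsymbol{u})))$, so
\[
I_{\boldsymbol{a}}=q^{-\sigma(\boldsymbol{a})-\sum_{i}d(\boldsymbol{a},\Gamma(h_{i}))s_{i}}\,Z_{(\mathcal{O}_{K}^{\times})^{n}}(\boldsymbol{s},\boldsymbol{\chi},\tilde{\boldsymbol{h}}).
\]
A standard fact from the theory of polyhedral subdivisions subordinate to $\Gamma(\boldsymbol{h})$ is that $F(\boldsymbol{a},\Gamma(h_{i}))$ is independent of $\boldsymbol{a}\in\mathrm{relint}(\Delta)$, so $\tilde{\boldsymbol{h}}\bmod\pi$ depends only on $\Delta$ and equals $\overline{\boldsymbol{h}}_{\Delta}:=\overline{\boldsymbol{h}}_{b(\Delta)}$.

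I now apply Theorem \ref{formula_estacionaria} to $Z_{(\mathcal{O}_{K}^{\times})^{n}}(\boldsymbol{s},\boldsymbol{\chi},\tilde{\boldsymbol{h}})$. The non-degeneracy of $\boldsymbol{h}$ with respect to $\Gamma(\boldsymbol{h})$ is precisely the assertion that for every non-empty $I\subseteq T$ the Jacobian of $\overline{\boldsymbol{h}}_{I,\Delta}$ has maximal rank at every $\overline{\boldsymbol{z}}\in\overline{V}_{I}\cap(\mathbb{F}_{q}^{\times})^{n}$, so $Sing_{\overline{\boldsymbol{h}}_{I,\Delta}}(\mathbb{F}_{q})\cap(\mathbb{F}_{q}^{\times})^{n}=\emptyset$ and the set $S$ of the stationary phase formula is empty. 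Hence the singular contribution $Z_{S}$ vanishes, and what remains is exactly $L_{\Delta}(\boldsymbol{s},\boldsymbol{\chi},\boldsymbol{h})$ as defined in the statement.

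Finally, I would sum $q^{-\sigma(\boldsymbol{a})-\sum_{i}d(\boldsymbol{a},\Gamma(h_{i}))s_{i}}$ over $\boldsymbol{a}\in\mathrm{relint}(\Delta)\cap\mathbb{N}^{n}$. Since $\sigma$ and each $d(\,\cdot\,,\Gamma(h_{i}))$ are linear on $\Delta$ and every such $\boldsymbol{a}$ decomposes uniquely as $\boldsymbol{t}+\sum_{j=1}^{l}k_{j}\boldsymbol{w}_{j}$ with $k_{j}\in\mathbb{N}$ and $\boldsymbol{t}$ in the fundamental parallelepiped (\ref{Subset_1}), the sum factors into a finite sum over $\boldsymbol{t}$ times a product of geometric series in the $\boldsymbol{w}_{j}$, yielding precisely $S_{\Delta}$. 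Combining with $L_{\Delta}$ over all $\Delta$ produces the announced formula, and meromorphic continuation to $\mathbb{C}^{r}$ is automatic from its explicit rational form. The main obstacle is the identification $\tilde{\boldsymbol{h}}\equiv\overline{\boldsymbol{h}}_{\Delta}\pmod{\pi}$ and verifying that it is independent of the chosen $\boldsymbol{a}\in\mathrm{relint}(\Delta)$; once this is in place, non-degeneracy cleanly kills $Z_{S}$ and the rest is bookkeeping.
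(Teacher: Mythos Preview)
Your proposal is correct and follows essentially the same route as the paper's own proof: decompose $\mathcal{O}_{K}^{n}$ according to the valuation vector using the fan $\mathcal{F}(\boldsymbol{h})\cup\{\boldsymbol{0}\}$, apply the change of variables $x_{j}=\pi^{k_{j}}u_{j}$ to reduce each piece to an integral over $(\mathcal{O}_{K}^{\times})^{n}$ involving the face functions, invoke non-degeneracy to make the singular set $S$ empty so that Theorem~\ref{formula_estacionaria} yields exactly $L_{\Delta}$, and then sum the resulting geometric series to obtain $S_{\Delta}$. Your write-up in fact spells out the last step (the fundamental parallelepiped decomposition) in slightly more detail than the paper, which simply refers to \cite[Theorem~1]{B-Z-0} for that computation.
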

	
	\begin{proof}

Notice that 
\[
{\mathbb{R}}_{+}^{n}=\left\{  \boldsymbol{0}\right\}
{\textstyle\bigsqcup}
{\textstyle\bigsqcup\nolimits_{\Delta\in{\mathcal{F}}(\boldsymbol{h})}}
\Delta.
\]
Hence%
\begin{align*}
Z(\boldsymbol{s},\boldsymbol{\chi},\boldsymbol{h})&=\int\limits_{({\mathcal{O}}_{K}^{\times})^{n}}\prod_{i=1}^{r}\chi_{i}(ac(h_{i}(\boldsymbol{x})))|h_{i}(\boldsymbol{x})|_{K}^{s_{i}}|d\boldsymbol{x}|_{K}\\
&+\sum
_{\substack{\text{$\Delta\in\mathcal{F}(\boldsymbol{h})$}}}\hspace{0.1cm}
\sum_{\boldsymbol{k}\in{\mathbb{N}}^{n}\cap\Delta}\hspace{0.3cm}
\int\limits_{\substack{\left\{  \boldsymbol{x}\in{\mathcal{O}}_{K}
		^{n};ord(\boldsymbol{x})=\boldsymbol{k}\right\}}}\prod_{i=1}^{r}\chi_{i}(ac(h_{i}(\boldsymbol{x})))|h_{i}(x)|_{K}^{s_{i}}|d\boldsymbol{x}|_{K}.
\end{align*}
By making the following change of variables
  $x_{j}=\pi^{k_{j}}u_{j}$ with $u_{j}\in{\mathcal{O}}_{K}^{\times}$, we obtain 
$|d\boldsymbol{x}|_{K}=q^{-\sigma(\boldsymbol{k})}|d\boldsymbol{u}|_{K}$, and $\boldsymbol{x}^{\boldsymbol{m}}=\pi^{\left\langle\boldsymbol{k},\boldsymbol{m}\right\rangle }\boldsymbol{u}^{m}$.
\begin{displaymath}
h_{i}(\boldsymbol{x})   =\pi^{d(\boldsymbol{k},\Gamma(h_{i}))}
(h_{i,\boldsymbol{k}}(\boldsymbol{u})+\pi^{l}\widetilde{h}_{i,\boldsymbol{k}
}(\boldsymbol{u}))
\end{displaymath}
where $\widetilde{h}_{i,\boldsymbol{k}}(\boldsymbol{u})\in \mathcal{O}^{\times}_{K}[\boldsymbol{x}]$ and $l>0$.  Note that
$h_{i,\boldsymbol{k}}(\boldsymbol{u})$ does not depend on $\boldsymbol{k}%
\in\Delta$, thus we will use $h_{i,\boldsymbol{k}}(\boldsymbol{u}%
)$ instead of $h_{i,b(\Delta)}(\boldsymbol{u})$, where $b(\Delta)$ is the barycenter of $\Delta$, hence 
\begin{displaymath}
h_{i}(\boldsymbol{x})  
=\pi^{d(\boldsymbol{k},\Gamma(h_{i}))}(h_{i,b(\Delta)}(\boldsymbol{u}%
)+\pi^{l}\widetilde{h}_{i,\boldsymbol{k}}(\boldsymbol{u})).
\end{displaymath} 
Then the integral $\int\limits_{\substack{\left\{  \boldsymbol{x}\in{\mathcal{O}}_{K}
		^{n};ord(\boldsymbol{x})=\boldsymbol{k}\right\}}}\prod_{i=1}^{r}\chi_{i}(ac(h_{i}(\boldsymbol{x})))|h_{i}(x)|_{K}^{s_{i}}|d\boldsymbol{x}|_{K}$ becomes to 
\begin{align*}
\sum_{\boldsymbol{k}\in{\mathbb{N}}^{n}\cap\Delta}&
\hspace{0.3cm}q^{-\sigma(\boldsymbol{k})-\sum_{i=1}^{r}d(\boldsymbol{k}
	,\Gamma(h_{i}))s_{i}} \\
&\times \int\limits_{({\mathcal{O}}
	_{K}^{\times})^{n}}\prod_{i=1}^{r}\chi_{i}(ac(h_{i,b(\Delta)}
(u)+\pi^{l}\widetilde{h}_{i,\boldsymbol{k}}(\boldsymbol{u})))|h_{i,b(\Delta)}
(u)+\pi^{l}\widetilde{h}_{i,\boldsymbol{k})}(\boldsymbol{u})|_{K}^{s_{i}
}|d\boldsymbol{u}|_{K}
\end{align*}
Now, since $\boldsymbol{h}$ is non degerate over $\mathbb{F}_{q}$ with respect to the Newton polyhedron $\Gamma(\boldsymbol{h})$ then for any $\Delta\in\mathcal{F}(\boldsymbol{h})\cup\{\boldsymbol{0}\}$ the polynomial mapping  $\boldsymbol{\boldsymbol{h}}_{\Delta}=(h_{1,b(\Delta)},\ldots, h_{r,b(\Delta)})$ satisfies $Sing_{\overline{h}_{\Delta, I}}(\mathbb{F}_{q})\cap(\mathbb{F}_{q}^{\times})^{n}=\varnothing$ for any non-empty subset  $I\subseteq \{1,2,\ldots, r\}$.
Thus by Theorem \ref{formula_estacionaria}, 
\[
Z(\boldsymbol{s},\boldsymbol{\chi},\boldsymbol{h})=\sum
_{\substack{\text{$\Delta\in\mathcal{F}(\boldsymbol{h})\cup \{\boldsymbol{0}\}$}}}L_{\Delta
}(\boldsymbol{s},\boldsymbol{\chi},\boldsymbol{h})\sum_{\boldsymbol{k}\in{\mathbb{N}}^{n}\cap\Delta}
\hspace{0.3cm}q^{-\sigma(\boldsymbol{k})-\sum_{i=1}^{r}d(\boldsymbol{k}
,\Gamma(h_{i}))s_{i}}.
\]
Finally, $\sum_{\boldsymbol{k}\in{\mathbb{N}}^{n}\cap\Delta}
\hspace{0.3cm}q^{-\sigma(\boldsymbol{k})-\sum_{i=1}^{r}d(\boldsymbol{k}
	,\Gamma(h_{i}))s_{i}}=S_{\Delta}$  follows from the proof of Theorem  1 in \cite{B-Z-0}.
\end{proof}

\subsection{Explicit formula for $Z\left(s,\chi,\frac{f}{g}\right)$}

Let	$\frac{f}{g}$ be a non-degenerate rational function over $F_{q}$ with
		respect to $\Gamma\left(\frac{f}{g}\right)$ and let $\mathcal{D}\left(\frac{f}{g}\right)$ the set
	of primitive vectors in ${\mathbb{N}}^{n}\backslash\left\{  \mathbf{0}%
	\right\}  $ perpendicular to the facets of $\Gamma\left(\frac{f}{g}\right)$. Let 	$T_{+}$, $T_{-}$, $\alpha$, $\beta$, 	$\widetilde{\alpha}$ and $\widetilde{\beta}$ as in Section  5 in \cite{B-Z-0}.
	
	Let	$\chi$ be a multiplicative character of ${\mathcal{O}}_{K}^{\times}$, we define the local zeta function attached to $\left(\frac{f}{g},\chi\right)$ as
	\[
	Z\left(s,\chi,\frac{f}{g}\right)=Z(s,-s,\chi,\chi^{-1},f,g),\ s\in\mathbb{C},
	\]
	where $Z\left(s_{1},s_{2},\chi_{1},\chi_{2},f,g\right)$ denotes the meromorphic
	continuation of the local zeta function attached to the polynomial mapping
	$(f,g)$, see Theorem \ref{Theorem1}.
	
	\begin{theorem}
		\label{Theorem2} Let $\frac{f}{g}$ and $\chi$ as above, and let 		$\mathcal{F}\left(\frac{f}{g}\right)$ be a fixed simplicial polyhedral subdivision of $\mathbb{R}_{+}^{n}$ subordinate to
		$\Gamma\left(\frac{f}{g}\right)$ and let $\boldsymbol{w}_{1},\ldots,\allowbreak\boldsymbol{w}_{l}\in
		{\mathbb{N}}^{n}\backslash\left\{  \mathbf{0}\right\}$ be the generators of a cone $\Delta\in \mathcal{F}(\boldsymbol{h})$. Then the following assertions hold:
		
		\noindent(i) $Z\left(s,\chi,\frac{f}{g}\right)$ has a meromorphic continuation to the
		whole complex plane as a rational function of $q^{-s}$ and the following
		explicit formula holds:
		\[
		Z\left(s,\chi,\frac{f}{g}\right)=\sum_{\text{$\Delta\in\mathcal{F}\left(\frac{f}{g}\right)\cup$%
			}\left\{  \mathbf{0}\right\}  }L_{\Delta}\left(s,\chi,\frac{f}{g}\right)S_{\Delta}(s),
		\]
		where 
		\begin{align*}
		L_{\Delta}\left(s,\chi,\frac{f}{g}\right)&= \nu \left(\frac{f_{\Delta}}{g_{\Delta}}, \chi\right)-N_{\Delta,\left\{
	f\right\}  }\frac{1-q^{-s}}{1-q^{-1-s}}-N_{\Delta,\left\{  g\right\}  }%
\frac{1-q^{s}}{1-q^{-1+s}} \\
&-N_{\Delta,\left\{  f,g\right\}  }\frac{(1-q^{-s})(1-q^{s}%
	)}{q(1-q^{-1-s})(1-q^{-1+s})}
		\end{align*}
		with
			 	\begin{equation*}
		\nu \left(\frac{f_{\Delta}}{g_{\Delta}}, \chi\right)=\left\{
		\begin{array}
		[c]{lll}%
		q^{-n}(q-1)^{n}  & \text{if} & \chi=\chi_{triv},\\
		&  & \\
		\sum\limits_{\substack{\boldsymbol{a}\in(\mathcal{O}_{K}^{\times})^{n} \\ \overline{\boldsymbol{a}}\mod\pi^{e_{\chi}}\\\overline{f}_{b(\Delta)}(\overline{\boldsymbol{a}})\neq 0, \hspace{.2cm}\overline{g}_{b(\Delta)}(\overline{\boldsymbol{a}})\neq 0}} q^{-ne_{\chi}}\chi\left(\frac{f_{b(\Delta)}\left(\boldsymbol{a}\right)}{g_{b(\Delta)}\left(  \boldsymbol{a}\right)}\right)  &
		\text{if} & \chi\neq \chi_{triv},\\
		\end{array}
		\right.
		\end{equation*}
		where $e_{\chi}$ is the conductor of $\chi$, and 
		$$N_{\Delta,\{f\}}   =\sigma (\overline{f}_{\Delta},(\mathcal{O}_{K}^{\times})^{n}, \boldsymbol{\chi}),N_{\Delta, \{g\}}   =\sigma (\overline{g}_{\Delta},(\mathcal{O}_{K}^{\times})^{n}, \boldsymbol{\chi}),$$  and 
$$N_{\Delta, \{f,g\}}=\sigma ((\overline{f}_{\Delta}, \overline{g}_{\Delta}),(\mathcal{O}_{K}^{\times})^{n}, \boldsymbol{\chi})$$ for 
		$\boldsymbol{\chi}=(\chi,\chi)$, and 
		\[
		S_{\Delta}(s)=\frac{\sum_{\boldsymbol{t}}q^{-\sigma(\boldsymbol{t}%
				)-(d(\boldsymbol{t},\Gamma(f))-d(\boldsymbol{t},\Gamma(g))){s}}}{\prod
			_{i=1}^{l}(1-q^{-\sigma(\boldsymbol{w}_{i})-(d(\boldsymbol{w}_{i}%
				,\Gamma(f))-d(\boldsymbol{w}_{i},\Gamma(g))){s}})},
		\]
		where $\boldsymbol{t}$ runs through the elements of the set (\ref{Subset_1}).
		
		\noindent(ii) $Z\left(s,\chi,\frac{f}{g}\right)$ is a holomorphic function on
		$\widetilde{\beta}<\operatorname{Re}(s)<\widetilde{\alpha}$ if $\chi=\chi_{triv}$, or on  $\beta<\operatorname{Re}(s)<\alpha$ if $\chi\neq \chi_{triv}$ and on these bands 
		it verifies that
		\begin{equation}
		Z\left(s,\chi,\frac{f}{g}\right)=\int\limits_{{\mathcal{O}}_{K}^{n}\backslash D_{K}}%
		\chi\left(  ac\left(  \frac{f(x)}{g(x)}\right)  \right)  \left\vert
		\frac{f(x)}{g(x)}\right\vert ^{s}|dx|; \label{integral}%
		\end{equation}
		
	\end{theorem}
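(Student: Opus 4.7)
\noindent The plan is to obtain both parts from Theorem~\ref{Theorem1}, applied to the pair $\mathbf{h}=(f,g)$ with characters $\boldsymbol{\chi}=(\chi,\chi^{-1})$ and exponents $(s_1,s_2)=(s,-s)$. By definition $Z(s,\chi,f/g)=Z(s,-s,\chi,\chi^{-1},f,g)$, and the hypothesis that $f/g$ is non-degenerate with respect to $\Gamma(f/g)$ is, by definition, exactly what Theorem~\ref{Theorem1} requires of $(f,g)$; both conductors agree since $e_{\chi}=e_{\chi^{-1}}$, so the uniform-conductor hypothesis is also satisfied. The multivariate explicit formula is therefore available, and it only remains to rewrite $L_\Delta$ and $S_\Delta$ in the one-variable language of the statement.

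The simplification of $S_\Delta$ is immediate: substituting $(s_1,s_2)=(s,-s)$ into the formula of Theorem~\ref{Theorem1} turns each exponent $d(\mathbf{w}_i,\Gamma(f))s_1+d(\mathbf{w}_i,\Gamma(g))s_2$ into $(d(\mathbf{w}_i,\Gamma(f))-d(\mathbf{w}_i,\Gamma(g)))s$, and likewise for each numerator term. For $L_\Delta$, the four index sets $I\subseteq\{1,2\}$ correspond to the vanishing pattern of $(\overline{f}_\Delta,\overline{g}_\Delta)$ on $(\mathbb{F}_q^\times)^n$. Non-degeneracy of the pair forces all four loci to be smooth on the torus, so a routine inclusion--exclusion rewrites the $\sigma(\overline{\mathbf{h}}_{I,\Delta},(\mathcal{O}_K^\times)^n,\boldsymbol{\chi})$ and $\nu(\overline{\mathbf{h}}_\Delta,(\mathcal{O}_K^\times)^n,\boldsymbol{\chi})$ of Theorem~\ref{Theorem1} as $\mathbb{Z}$-linear combinations of $N_{\Delta,\{f\}}$, $N_{\Delta,\{g\}}$, $N_{\Delta,\{f,g\}}$ and the total torus mass $q^{-n}(q-1)^n$ (respectively the full character sum in the non-trivial case). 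Using the identity $1-\frac{(q-1)q^{-1-s_i}}{1-q^{-1-s_i}}=\frac{1-q^{-s_i}}{1-q^{-1-s_i}}$ with $s_1=s$ and $s_2=-s$, the terms rearrange into the displayed form of $L_\Delta(s,\chi,f/g)$. When $\chi\neq\chi_{\mathrm{triv}}$, Theorem~\ref{Theorem1} gives $\sigma_I=0$ for every non-empty $I$, so all $N_{\Delta,\cdot}$ vanish automatically and $L_\Delta$ collapses to the character sum defining $\nu(f_\Delta/g_\Delta,\chi)$ via the factorisation $\chi(ac\,f_{b(\Delta)}(a))\,\chi^{-1}(ac\,g_{b(\Delta)}(a))=\chi(f_{b(\Delta)}(a)/g_{b(\Delta)}(a))$.

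For part (ii), the integral (\ref{Integral_1}) is absolutely convergent on the indicated strip. Indeed, applying the same cell decomposition $\mathbb{R}_+^n=\{\mathbf{0}\}\sqcup\bigsqcup_{\Delta}\Delta$ and change of variables $x_j=\pi^{k_j}u_j$ used in the proof of Theorem~\ref{Theorem1}, the bound $|\chi(\cdot)|\le 1$ reduces convergence to that of the geometric series defining each $S_\Delta(s)$. These series converge precisely when $\mathrm{Re}(s)$ lies between the first real zeros of the denominators $1-q^{-\sigma(\mathbf{w}_i)-(d(\mathbf{w}_i,\Gamma(f))-d(\mathbf{w}_i,\Gamma(g)))s}$ as $\mathbf{w}_i$ ranges over $\mathcal{D}(f/g)$; these are exactly the bounds $(\widetilde{\beta},\widetilde{\alpha})$ for $\chi=\chi_{\mathrm{triv}}$ and $(\beta,\alpha)$ for $\chi\neq\chi_{\mathrm{triv}}$ in the notation of \cite[Section~5]{B-Z-0}. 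On that strip both the integral and the rational expression of (i) are holomorphic and coincide by construction, proving (\ref{integral}).

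The main obstacle I expect is the bookkeeping in the rewriting of $L_\Delta$: the inclusion--exclusion step produces several terms which must partially cancel so as to reproduce the precise coefficients of $(1-q^{-s})/(1-q^{-1-s})$, $(1-q^s)/(1-q^{-1+s})$, and of the mixed factor in front of $N_{\Delta,\{f,g\}}$; this calculation is straightforward but error-prone and also branches on whether $\chi=\chi_{\mathrm{triv}}$ or not. A secondary subtlety in the non-trivial character case is to verify that the sum over representatives modulo $\pi^{e_{\chi}}$ defining $\nu(f_\Delta/g_\Delta,\chi)$ is well-posed, which reduces to the equality of the conductors of $\chi$ and $\chi^{-1}$ already noted.
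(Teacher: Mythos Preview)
Your proposal is correct and follows essentially the same approach as the paper: both derive the result by specializing Theorem~\ref{Theorem1} to $r=2$, $(h_1,h_2)=(f,g)$, $(\chi_1,\chi_2)=(\chi,\chi^{-1})$, $(s_1,s_2)=(s,-s)$, and then rewriting $L_\Delta$ and $S_\Delta$ in the one-variable notation. The paper's own proof is in fact terser than yours---it does not spell out the inclusion--exclusion bookkeeping for $L_\Delta$ or the convergence argument for part~(ii), invoking instead that the multivariate zeta function agrees with its integral on its domain of holomorphy---so the details you flag as ``the main obstacle'' are precisely what the paper leaves to the reader.
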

	
	\begin{proof}
		(i) It follows from Theorem \ref{Theorem1} as follows: we take $r=2$, $\chi_{1}=\chi$, $\chi_{2}=\chi
		^{-1}$, $h_{1}=f_{b\left(  \Delta\right)  }$ and $h_{2}=g_{b\left(
			\Delta\right)  }$ for $\Delta\in\mathcal{F}(\frac{f}{g})\cup$$\left\{
		\mathbf{0}\right\}  $, with the convention that if $b\left(  \Delta\right)
		=b\left(  \mathbf{0}\right)  =\mathbf{0}$, then $h_{1}=f$ and $h_{2}=g$
		
		(ii) From the explicit formula given in (i), follows that the rational function
		$Z(s,\chi,\frac{f}{g})=Z(s,-s,\chi,\chi^{-1},f,g)$ is holomorphic on the band
		$\widetilde{\beta}<\operatorname{Re}(s)<\widetilde{\alpha}$ if  $\chi=\chi_{triv}$, or on  $\beta<\operatorname{Re}(s)<\alpha$ if $\chi\neq \chi_{triv}$, and then
		$Z(s,\chi,\frac{f}{g})$ is given by integral (\ref{integral}) because
		$Z(s_{1},s_{2},\chi_{1},\chi_{2},f,g)$ agrees with an integral on its domain
		of holomorphy.
	\end{proof}
\begin{remark}
	We notice that in the expression  $\nu\left(\frac{f_{\Delta}}{g_{\Delta}}, \chi\right)$ when $\chi\neq \chi_{_{triv}},$ the conditions $\overline{f}_{b(\Delta)}(\overline{\boldsymbol{a}})\neq 0$ and  $\overline{g}_{b(\Delta)}(\overline{\boldsymbol{a}})\neq 0$ implies that the quotient $\frac{f_{b(\Delta)}\left(  \boldsymbol{a}\right)}{g_{b(\Delta)}\left(  \boldsymbol{a}\right)}$ is a unit in $\mathcal{O}_{K}$.
\end{remark}
	
\section{The Candidate Poles of $Z\left(s,\chi,\frac{f}{g}\right)$ and the poles determined by the diagonal}\label{Sect_6}
	In this section we use all the notation introduced in Section \ref{Sect_5}. We fix two co-prime polynomials $f(\boldsymbol{x})$, $g(\boldsymbol{x}%
	)\in{\mathcal{O}}_{K}[x_{1},\ldots,x_{n}]\backslash\pi{\mathcal{O}}_{K}%
	[x_{1},\ldots,x_{n}]$ with $n\geq 2$ and $f(\boldsymbol{0})=g(\boldsymbol{0})=0$, and also we fix a  simplicial polyhedral subdivision $\mathcal{F}\left(  \frac
	{f}{g}\right)  $ of $\mathbb{R}_{+}^{n}$ subordinate to $\Gamma\left(
	\frac{f}{g}\right)$.
\begin{proposition}\label{PropPoles}
	Suppose that $f,g$ satisfy the conditions of Theorem \ref{Theorem2}. Let $\gamma_{1}$,\ldots,  $\gamma_{l}$ be all the facets of $\Gamma\left(\frac{f}{g}\right)$ and let $\boldsymbol{w}_{1},\ldots \boldsymbol{w}_{l}\in\mathbb{N}\backslash\{0\}$ be the unique primitive vectors that are perpendicular to $\gamma_{1}$,\ldots,  $\gamma_{l}$ respectively. Then
	
		\noindent(i) If $\chi=\chi_{triv}$ and $s$ is a pole of the meromorphic
continuation of $Z\left(s,\frac{f}{g}\right)$, then 
\begin{align*}
&s=1+\frac{2\pi\sqrt{-1}k}{\ln{q}} \text{ with $k\in{\mathbb{Z}}$ or}\\
&s=  -1+\frac{2\pi
	\sqrt{-1}k}{\ln{q}}  \text{ with $k\in{\mathbb{Z}}$ or}\\
&s=\frac{\sigma(\boldsymbol{w_{i}}%
	)}{d(\boldsymbol{w_{i}},\Gamma(g))-d(\boldsymbol{w}_{i},\Gamma(f))}+\frac{2\pi
	\sqrt{-1}k}{\left\{  d(\boldsymbol{w_{i}},\Gamma(g))-d(\boldsymbol{w}_{i}%
	,\Gamma(f))\right\}  \ln{q}} .
\end{align*}
with $k\in{\mathbb{Z}}$, $i\in\{1,2,\ldots,l\}$ and $d(\boldsymbol{w_{i}},\Gamma(g))-d(\boldsymbol{w}_{i}%
,\Gamma(f))\neq 0$.

	\noindent(ii) If $\chi\neq\chi_{triv}$ and $s$ is a pole of the meromorphic
continuation of $Z(s,\chi,\frac{f}{g})$, then
\[s=\frac{\sigma(\boldsymbol{w}_{i}%
	)}{d(\boldsymbol{w}_{i},\Gamma(g))-d(\boldsymbol{w}_{i},\Gamma(f))}+\frac{2\pi
	\sqrt{-1}k}{\left\{  d(\boldsymbol{w}_{i},\Gamma(g))-d(\boldsymbol{w}_{i}%
	,\Gamma(f))\right\}  \ln{q}} .\]
with $k\in{\mathbb{Z}}$, $i\in\{1,2,\ldots,l\}$ and $d(\boldsymbol{w}_{i},\Gamma(g))-d(\boldsymbol{w}_{i}%
,\Gamma(f))\neq 0$.
\end{proposition}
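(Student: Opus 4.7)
The plan is to read off the candidate poles directly from the closed-form expression
\[
Z\!\left(s,\chi,\frac{f}{g}\right)=\sum_{\Delta\in\mathcal{F}(f/g)\cup\{\boldsymbol{0}\}} L_{\Delta}\!\left(s,\chi,\tfrac{f}{g}\right)S_{\Delta}(s)
\]
provided by Theorem \ref{Theorem2}. Since this presentation exhibits $Z(s,\chi,f/g)$ as a finite sum of rational functions in $q^{-s}$, every pole must be a zero of at least one of the denominators occurring in some $L_{\Delta}$ or $S_{\Delta}$. Hence the strategy is: enumerate the denominator factors, solve each elementary equation of the shape $1-q^{A+Bs}=0$, and verify that every solution falls into the list of the proposition.

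First I would handle the $L_{\Delta}$ contribution. When $\chi\neq\chi_{\mathrm{triv}}$, the pair $\boldsymbol{\chi}=(\chi,\chi^{-1})$ is non-trivial in each coordinate, so by the very definition of $\sigma(\cdot,\cdot,\boldsymbol{\chi})$ in Section \ref{Sect_3} we get $N_{\Delta,\{f\}}=N_{\Delta,\{g\}}=N_{\Delta,\{f,g\}}=0$, and $L_{\Delta}$ reduces to the $s$-independent quantity $\nu(f_{\Delta}/g_{\Delta},\chi)$. Thus no pole originates in $L_{\Delta}$, which explains why the $\pm1$-families are absent from part (ii). When $\chi=\chi_{\mathrm{triv}}$, the only $s$-dependent denominators inside $L_{\Delta}$ are powers of $1-q^{-1-s}$ and $1-q^{-1+s}$, whose zero sets are $s=-1+2\pi\sqrt{-1}k/\ln q$ and $s=1+2\pi\sqrt{-1}k/\ln q$ respectively; this yields the first two families in (i).

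Next I turn to $S_{\Delta}$, which is the common source of poles in both (i) and (ii). Its denominator is a product of factors
\[
1-q^{-\sigma(\boldsymbol{w}_{i})-(d(\boldsymbol{w}_{i},\Gamma(f))-d(\boldsymbol{w}_{i},\Gamma(g)))s}
\]
indexed by the generators of the simplicial cone $\Delta$. Because $\mathcal{F}(f/g)$ is subordinate to $\Gamma(f/g)=\Gamma(fg)$, each such generator is a primitive vector normal to some facet of $\Gamma(f/g)$, hence belongs to $\{\boldsymbol{w}_{1},\ldots,\boldsymbol{w}_{l}\}$. Setting the above factor equal to zero and solving for $s$ under the hypothesis $d(\boldsymbol{w}_{i},\Gamma(f))-d(\boldsymbol{w}_{i},\Gamma(g))\neq0$ gives exactly the third family in (i) and the sole family in (ii), after rewriting $-\sigma(\boldsymbol{w}_{i})/(d(\boldsymbol{w}_{i},\Gamma(f))-d(\boldsymbol{w}_{i},\Gamma(g)))$ as $\sigma(\boldsymbol{w}_{i})/(d(\boldsymbol{w}_{i},\Gamma(g))-d(\boldsymbol{w}_{i},\Gamma(f)))$ and letting $k$ range over $\mathbb{Z}$. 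When $d(\boldsymbol{w}_{i},\Gamma(f))-d(\boldsymbol{w}_{i},\Gamma(g))=0$ the factor collapses to the nonzero constant $1-q^{-\sigma(\boldsymbol{w}_{i})}$ (nonzero since $\sigma(\boldsymbol{w}_{i})\geq 1$ for $\boldsymbol{w}_{i}\in\mathbb{N}^{n}\setminus\{\boldsymbol{0}\}$), contributing no pole; this justifies excluding such $i$ from the list.

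The main obstacle is really just careful bookkeeping: one must (a) confirm that the $\sigma$-coefficients in $L_{\Delta}$ truly vanish for non-trivial $\boldsymbol{\chi}=(\chi,\chi^{-1})$, so that the $\pm1$-families are genuinely absent in (ii); and (b) verify that the generators appearing inside the $S_{\Delta}$'s are exhausted by the primitive normals $\boldsymbol{w}_{1},\ldots,\boldsymbol{w}_{l}$ attached to the facets of $\Gamma(f/g)$. Once these two points are secured, each candidate pole is produced by solving an elementary exponential equation, and the proposition follows directly.
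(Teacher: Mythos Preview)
Your proposal is correct and takes essentially the same approach as the paper: the paper's proof is the single sentence ``This result is a direct consequence of the explicit formula in Theorem~\ref{Theorem2},'' and you carry out exactly that reading-off in detail, separating the $L_{\Delta}$ denominators (giving the $\pm1$-families only when $\chi=\chi_{\mathrm{triv}}$) from the $S_{\Delta}$ denominators. Your observation in (b) that the cone generators of $\mathcal{F}(f/g)$ coincide with the facet normals $\boldsymbol{w}_1,\ldots,\boldsymbol{w}_l$ is part of the standing convention for a simplicial subdivision subordinate to $\Gamma(f/g)$ (one triangulates each cone of the normal fan using only its extremal rays), so no additional rays---and hence no additional candidate poles---appear.
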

\begin{proof}
	This result is a direct consequence of the explicit formula in Theorem \ref{Theorem2}.
	\end{proof}
We will call to each $s$ described above a \textbf{candidate pole} of $Z\left(s,\chi,\frac{f}{g}\right)$. 

Notice that if $\Gamma(f)=\Gamma(g)$, then $Z\left(s,\chi,\frac{f}{g}\right)$ does not have poles coming from $S_{\Delta}$. Then along this section we assume that $\Gamma(f)\neq\Gamma(g)$.

\subsection{The expected order of a candidate pole}
Let $f,g$, and $\chi$ as in Theorem \ref{Theorem2}. For any $k\in \mathbb{Q}\backslash\{0\}$ we put
	\[
\mathcal{P}(k):=\left\{
\begin{array}
[c]{lll}%
\left\{  \boldsymbol{w}\in T_{-};\frac{\sigma
	(\boldsymbol{w})}{d(\boldsymbol{w},\Gamma(g))-d(\boldsymbol{w},\Gamma
	(f))}=k\right\}  &
\text{if} & k<0,\\
&  & \\
\left\{  \boldsymbol{w}\in T_{+};\frac{\sigma
	(\boldsymbol{w})}{d(\boldsymbol{w},\Gamma(g))-d(\boldsymbol{w},\Gamma
	(f))}=k\right\}  & \text{if} & k>0 ,%
\end{array}
\right.
\]
and for $m\in\mathbb{N}$ with $1\leq m\leq n$,
\[
\mathcal{M}_{m}(k):=\left\{  \text{$\Delta\in\mathcal{F}\left(  \frac
	{f}{g}\right)  $; $\Delta$ has exactly $m$ generators belonging to
}\mathcal{P}(k)\right\}  ,
\]
 $\rho(k):=\max\left\{  m;\mathcal{M}_{m}(k)\neq\varnothing\right\} $.
 
 If $s$ is a candidate pole of $Z(s,\chi,\frac{f}{g})$,  we set $\mathcal{P}(s):=\mathcal{P}(Re(s))$, $\mathcal{M}_{m}(s):=\mathcal{M}_{m}(Re(s))$ and $\rho(s):=\rho(Re(s))$. 
\begin{definition}
 \textbf{The expected order of a candidate pole} $s$ is $\rho(s)$  if $Re(s)\neq -1,1$ and  $\chi=\chi_{triv}$ or $\chi\neq \chi_{triv}$ ; otherwise if $Re(s)=-1$ or $1$ and $\chi=\chi_{triv}$, it will be $1$, $\rho(s)$ or  $\rho(s)+1$.
\end{definition} This definition follows from the explicit form of $Z(s,\chi,\frac{f}{g})$ given in Theorem \ref{Theorem2}. Notice that the actual order of a pole is less than or equal than the expected order.

 We recall that in the case $T_{-}\neq\varnothing$, $\beta$ is the largest possible negative real part of the poles of
$Z(s,\chi,\frac{f}{g})$ coming from $S_{\Delta}$ and we set $\rho:=\rho(\beta)$.
Similarly, in the case $T_{+}\neq\varnothing$, $\alpha$ is the smallest possible positive real part of the poles of $Z(s,\chi,\frac{f}{g})$ coming from $S_{\Delta}$ and we set $\kappa:=\rho(\alpha)$.

\subsection{The pole determined by the diagonal $D=\{(t,\ldots,t):t\in\mathbb{R}\}$}
It is well known that in the case of local zeta functions attached to one polynomial, the `intersection point' of the diagonal $D=\{(t,\ldots,t):t\in\mathbb{R}\}$ with the boundary of the Newton polyhedron is the largest real candidate pole different from $-1$, see e.g.  \cite{Hoo1},  \cite{Varchenko}. But in this case of local zeta functions for rational functions, due to the forms of the poles, this occur only for special cases, see Theorems \ref{Diagonal1} and \ref{Diagonal2}. 
\begin{definition}
	Let $f$, $g$ be non-zero polynomials as above.  Let  $(t_{0},\ldots,t_{0})$ be the unique intersection point of the diagonal $D=\{(t,\ldots,t):t\in\mathbb{R}\}$ with the boundary of the Newton polyhedron  $\Gamma\left(\frac{f}{g}\right)$, $\tau_{0}$ be the smallest face of   $\Gamma\left(\frac{f}{g}\right)$ such that $(t_{0},\ldots,t_{0})\in \tau_{0}$. Let $\gamma_{1},\ldots,\gamma_{e}$ be all the facets such that $\tau_{0}\subseteq \gamma_{i}$ for all $i$, and $\boldsymbol{w}_{1},\ldots, \boldsymbol{w}_{e}\in\mathbb{N}\backslash\{0\}$ be the unique primitive vectors that are perpendicular to $\gamma_{1},\ldots,\gamma_{e}$, respectively. We denote by $\mathcal{D}(t_{0})$ the set of all of these primitive vectors and we set 
	$ \mathcal{D}_{-}(t_{0}):=\mathcal{D}(t_{0})\cap T_{-}$ and $ \mathcal{D}_{+}(t_{0}):=\mathcal{D}(t_{0})\cap T_{+}$.
\end{definition}
The following Proposition is similar to Lemma 5.3 in \cite{DenefHoor}.
\begin{proposition}\label{Prop3}
 For any $\boldsymbol{a}\in\mathbb{R}_{+}^{n}$ we have $\sigma(\boldsymbol{a})-(d(\boldsymbol{a},\Gamma(g))+d(\boldsymbol{a},\Gamma(f)))(1/t_{0})\geq 0$ with equality if and only if $\tau_{0}\subseteq  F\left(\boldsymbol{a},\Gamma\left(\frac{f}{g}\right)\right)$.
\end{proposition}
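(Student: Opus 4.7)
The plan is to reduce the claim to the basic convexity fact that for any face $F$ of $\Gamma(f/g)$, a point $p$ lies in $F(\boldsymbol{a},\Gamma(f/g))$ exactly when $\boldsymbol{a}$ attains its minimum on $\Gamma(f/g)$ at $p$. The key auxiliary ingredient is the additivity of the support function under Minkowski sums: since $\Gamma(fg)=\Gamma(f)+\Gamma(g)$ as subsets of $\mathbb{R}_+^n$, one has
\[
d(\boldsymbol{a},\Gamma(f/g))=d(\boldsymbol{a},\Gamma(fg))=d(\boldsymbol{a},\Gamma(f))+d(\boldsymbol{a},\Gamma(g)).
\]
This identity lets us rewrite the proposed inequality in the intrinsic form
\[
\sigma(\boldsymbol{a})-\frac{1}{t_0}\,d(\boldsymbol{a},\Gamma(f/g))\ge 0.
\]

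First I would establish the inequality. By definition $(t_0,\ldots,t_0)\in\Gamma(f/g)$, hence
\[
d(\boldsymbol{a},\Gamma(f/g))=\min_{x\in\Gamma(f/g)}\langle\boldsymbol{a},x\rangle\le\langle\boldsymbol{a},(t_0,\ldots,t_0)\rangle=t_0\,\sigma(\boldsymbol{a}),
\]
which, after dividing by $t_0>0$ and invoking the Minkowski identity above, gives the desired bound.

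Next I would characterize equality. The previous display is an equality precisely when $(t_0,\ldots,t_0)$ is a minimizer of $\langle\boldsymbol{a},\cdot\rangle$ on $\Gamma(f/g)$, that is, when $(t_0,\ldots,t_0)\in F(\boldsymbol{a},\Gamma(f/g))$. Since $F(\boldsymbol{a},\Gamma(f/g))$ is itself a face of $\Gamma(f/g)$ containing $(t_0,\ldots,t_0)$, and $\tau_0$ is by definition the smallest such face, the containment $(t_0,\ldots,t_0)\in F(\boldsymbol{a},\Gamma(f/g))$ is equivalent to $\tau_0\subseteq F(\boldsymbol{a},\Gamma(f/g))$. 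The converse is immediate because $(t_0,\ldots,t_0)\in\tau_0$.

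There is no real obstacle here; the only point that requires a sentence of care is the Minkowski additivity $\Gamma(fg)=\Gamma(f)+\Gamma(g)$, which I would either cite from the references \cite{K} or \cite{Sturmfels} listed in the paper, or recall in one line from the fact that the support function of a convex body is linear under Minkowski sums.
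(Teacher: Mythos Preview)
Your proof is correct and follows essentially the same route as the paper's own argument: both use that $(t_0,\ldots,t_0)\in\Gamma(f/g)$ gives $\langle\boldsymbol{a},(t_0,\ldots,t_0)\rangle\ge d(\boldsymbol{a},\Gamma(f/g))$, invoke the additivity $d(\boldsymbol{a},\Gamma(fg))=d(\boldsymbol{a},\Gamma(f))+d(\boldsymbol{a},\Gamma(g))$ (the paper cites this as Remark~3 in \cite{B-Z-0}, you phrase it via Minkowski sums), and then use minimality of $\tau_0$ among faces containing $(t_0,\ldots,t_0)$ for the equality case.
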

\begin{proof}
Because of $(t_{0},\ldots,t_{0})\in \Gamma((f,g))$ and by  definition of $d(\boldsymbol{a},\Gamma((f,g)))$, it follows that $(t_{0},\ldots,t_{0})\cdot \boldsymbol{a}\geq d(\boldsymbol{a},\Gamma((f,g)))$. Hence, $\sigma(\boldsymbol{a})-d(\boldsymbol{a},\Gamma((f,g)(1/t_{0})\geq 0$.  
The first result follows from the fact  $d(\boldsymbol{a},\Gamma((f,g)))=d(\boldsymbol{a},\Gamma(g))+d(\boldsymbol{a},\Gamma(f))$, see Remark 3 in \cite{B-Z-0}. Furthermore, $\sigma(\boldsymbol{a})-d(\boldsymbol{a},\Gamma(f,g))(1/t_{0})= 0$ if and only if $(t_{0},\ldots,t_{0})\in F(\boldsymbol{a},\Gamma\left( (f, g)\right)  ) $ if and only if $\tau_{0}\subseteq  F(\boldsymbol{a},\Gamma\left((f,g)\right))$, because $\tau_{0}$ is the smallest face that contains  $(t_{0}\ldots,t_{0})$.
	\end{proof}
\begin{corollary}\label{Cor1}
	Let $\boldsymbol{w}\in \mathcal{D}(t_{0})$. Then, $d(\boldsymbol{w},\Gamma(g))=0$ if and only if  $(t_{0},\ldots,t_{0})\in F(\boldsymbol{w},\Gamma(f))$. Similarly, $d(\boldsymbol{w},\Gamma(f))=0$ if and only if $(t_{0},\ldots,t_{0})\in F(\boldsymbol{w},\Gamma(g))$.
\end{corollary}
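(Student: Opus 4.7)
The plan is to derive the corollary directly from Proposition \ref{Prop3} together with the Minkowski-sum identity $\Gamma(fg) = \Gamma(f) + \Gamma(g)$, which also underlies the formula $d(\boldsymbol{a}, \Gamma((f,g))) = d(\boldsymbol{a}, \Gamma(f)) + d(\boldsymbol{a}, \Gamma(g))$ used in the proof of Proposition \ref{Prop3}. First I would observe that since $\boldsymbol{w} \in \mathcal{D}(t_0)$ is primitive and perpendicular to a facet $\gamma$ of $\Gamma(f/g)$ with $\tau_0 \subseteq \gamma = F(\boldsymbol{w}, \Gamma(f/g))$, we have in particular $\tau_0 \subseteq F(\boldsymbol{w}, \Gamma(f/g))$. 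Proposition \ref{Prop3} then forces the equality
\[
t_0\,\sigma(\boldsymbol{w}) \;=\; d(\boldsymbol{w}, \Gamma(f)) + d(\boldsymbol{w}, \Gamma(g)),
\]
which is the single identity driving the whole argument.

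The reverse implication of the first equivalence is immediate: if $(t_0, \ldots, t_0) \in F(\boldsymbol{w}, \Gamma(f))$, then $\langle \boldsymbol{w}, (t_0, \ldots, t_0)\rangle = d(\boldsymbol{w}, \Gamma(f))$, i.e.\ $t_0\,\sigma(\boldsymbol{w}) = d(\boldsymbol{w}, \Gamma(f))$, and subtracting from the displayed identity yields $d(\boldsymbol{w}, \Gamma(g)) = 0$. For the forward implication, assuming $d(\boldsymbol{w}, \Gamma(g)) = 0$, the displayed identity already gives $\langle \boldsymbol{w}, (t_0, \ldots, t_0)\rangle = d(\boldsymbol{w}, \Gamma(f))$, so it only remains to check that $(t_0, \ldots, t_0)$ actually lies in $\Gamma(f)$. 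For this I would invoke the companion decomposition $F(\boldsymbol{w}, \Gamma(fg)) = F(\boldsymbol{w}, \Gamma(f)) + F(\boldsymbol{w}, \Gamma(g))$ of first meet loci under Minkowski sums: since $(t_0, \ldots, t_0) \in \tau_0 \subseteq F(\boldsymbol{w}, \Gamma(fg))$, one can write $(t_0, \ldots, t_0) = u + v$ with $u \in F(\boldsymbol{w}, \Gamma(f)) \subseteq \Gamma(f)$ and $v \in \mathbb{R}_+^n$; then $(t_0, \ldots, t_0) \in u + \mathbb{R}_+^n \subseteq \Gamma(f)$ by upward-closedness of $\Gamma(f)$.

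The second equivalence follows by swapping the roles of $f$ and $g$ throughout. The only ingredient beyond Proposition \ref{Prop3} is the Minkowski decomposition of first meet loci, but this is standard for supporting hyperplanes of Minkowski sums of polyhedra, so I do not expect any substantial obstacle.
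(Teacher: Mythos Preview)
Your argument is correct and follows essentially the same route as the paper: both use that $\boldsymbol{w}\in\mathcal{D}(t_0)$ forces the equality case of Proposition~\ref{Prop3}, so $t_0\,\sigma(\boldsymbol{w})=d(\boldsymbol{w},\Gamma(f))+d(\boldsymbol{w},\Gamma(g))$, and then read off the equivalence. The only difference is that you explicitly verify $(t_0,\ldots,t_0)\in\Gamma(f)$, which the paper leaves implicit; note that this step can be shortened, since $\Gamma(fg)=\Gamma(f)+\Gamma(g)\subseteq\Gamma(f)$ by upward-closedness, so the full face decomposition of the Minkowski sum is not needed.
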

\begin{proof}
By using Proposition \ref{Prop3} and the definition of $F(\boldsymbol{w},\Gamma)$, the result follows from the following equivalent propositions, $d(\boldsymbol{w},\Gamma(g))=0$ if and only if $\sigma(\boldsymbol{w})-d(\boldsymbol{w},\Gamma(f))(1/t_{0})= 0$ if and only if $(t_{0},\ldots,t_{0})\in F(\boldsymbol{w},\Gamma\left( f\right)) $. A similar argument shows  $d(\boldsymbol{w},\Gamma(f))=0$ if and only if $(t_{0},\ldots,t_{0})\in F(\boldsymbol{w},\Gamma(g))$.
	\end{proof}

The following propositions stablish that all the possible negative (resp., positive) real parts of the poles coming from $S_{\Delta}$ have $-1/t_{0}$ as an upper bound (resp., $1/t_{0}$ as a lower bound). Furthermore, the result stablish in Proposition \ref{cotasup} is similar to the one given in the case of local zeta functions attached to one polynomial, see \cite[Proposition 4.6]{Hoo1}.

\begin{proposition}\label{cotasup}
	Let $f$, $g$ be non-zero polynomials and the character $\chi$  satisfy the conditions of Theorem \ref{Theorem2}.  Then for every pole  $s$ of $Z(s,\chi,\frac{f}{g})$ with $Re(s)<0$  one has  $Re(s)\leq -1/t_{0}$ if $\chi\neq\chi_{triv}$; otherwise  $Re(s)=-1$ or $Re(s)\leq -1/t_{0}$. 
\end{proposition}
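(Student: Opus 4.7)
The plan is to read off the candidate poles from Proposition \ref{PropPoles} and bound their negative real parts using Proposition \ref{Prop3}. An actual pole is always a candidate pole, so if $s$ is a pole with $\operatorname{Re}(s)<0$, then either $\operatorname{Re}(s)=-1$ (which can only occur in the trivial character case, since the factor $\tfrac{1-q^{-s}}{1-q^{-1-s}}$ in $L_\Delta$ is absent otherwise), or else $\operatorname{Re}(s)$ is of the form
\[
\operatorname{Re}(s)=\frac{\sigma(\boldsymbol{w}_{i})}{d(\boldsymbol{w}_{i},\Gamma(g))-d(\boldsymbol{w}_{i},\Gamma(f))}
\]
for some primitive vector $\boldsymbol{w}_{i}$ coming from a facet of $\Gamma(f/g)$. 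Since $\sigma(\boldsymbol{w}_i)>0$ and $\operatorname{Re}(s)<0$, the denominator must be negative, i.e.\ $\boldsymbol{w}_i\in T_-$ and $d(\boldsymbol{w}_{i},\Gamma(f))>d(\boldsymbol{w}_{i},\Gamma(g))\geq 0$.

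The key step is then to apply Proposition \ref{Prop3} to $\boldsymbol{a}=\boldsymbol{w}_i\in\mathbb{R}_+^n$, yielding
\[
\sigma(\boldsymbol{w}_{i})\;\geq\;\bigl(d(\boldsymbol{w}_{i},\Gamma(f))+d(\boldsymbol{w}_{i},\Gamma(g))\bigr)\,\frac{1}{t_0}.
\]
Dividing by the positive quantity $d(\boldsymbol{w}_{i},\Gamma(f))-d(\boldsymbol{w}_{i},\Gamma(g))$ and multiplying by $-1$, one obtains
\[
\operatorname{Re}(s)\;=\;-\frac{\sigma(\boldsymbol{w}_{i})}{d(\boldsymbol{w}_{i},\Gamma(f))-d(\boldsymbol{w}_{i},\Gamma(g))}\;\leq\;-\frac{1}{t_0}\cdot\frac{d(\boldsymbol{w}_{i},\Gamma(f))+d(\boldsymbol{w}_{i},\Gamma(g))}{d(\boldsymbol{w}_{i},\Gamma(f))-d(\boldsymbol{w}_{i},\Gamma(g))}\;\leq\;-\frac{1}{t_0},
\]
where the last inequality uses $d(\boldsymbol{w}_{i},\Gamma(g))\geq 0$, so the fraction multiplying $-1/t_0$ is $\geq 1$.

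Putting this together gives the claim: in the non-trivial character case the only available candidates with $\operatorname{Re}(s)<0$ are of the second form and all satisfy $\operatorname{Re}(s)\leq -1/t_0$; in the trivial character case there is the additional possibility $\operatorname{Re}(s)=-1$. The only potential subtlety is that $d(\boldsymbol{w}_{i},\Gamma(g))$ could be zero, but this is precisely permitted by Proposition \ref{Prop3} (and is in fact the equality case, by Corollary \ref{Cor1}), so no separate argument is needed. I do not expect any serious obstacle: the whole argument is a direct estimate once the candidate poles are enumerated, with Proposition \ref{Prop3} doing the real work.
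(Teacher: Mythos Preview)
Your proposal is correct and follows essentially the same approach as the paper: both invoke Proposition~\ref{PropPoles} to list the candidate poles, then use Proposition~\ref{Prop3} to derive the chain of inequalities $-\frac{1}{t_0}\geq -\frac{\sigma(\boldsymbol{w})}{d(\boldsymbol{w},\Gamma(f))+d(\boldsymbol{w},\Gamma(g))}\geq -\frac{\sigma(\boldsymbol{w})}{d(\boldsymbol{w},\Gamma(f))-d(\boldsymbol{w},\Gamma(g))}$ for $\boldsymbol{w}\in T_{-}$. Your write-up simply spells out the algebra a bit more explicitly than the paper does.
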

\begin{proof}
The result follows from Propositions  \ref{PropPoles}, \ref{Prop3}, and the following inequality
	\begin{equation*}
-\frac{1}{t_{0}}\geq -\frac{\sigma(\boldsymbol{w})}{d(\boldsymbol{w},\Gamma(g))+d(\boldsymbol{w},\Gamma(f))}\geq -\frac{\sigma(\boldsymbol{w})}{d(\boldsymbol{w},\Gamma(f))-d(\boldsymbol{w},\Gamma(g))},\quad \boldsymbol{w}\in T_{-} .
\end{equation*}
	\end{proof}

 \begin{proposition}\label{cotainf}
 	Let $f$, $g$ be non-zero polynomials and the character $\chi$  satisfy the conditions of Theorem \ref{Theorem2}. Then for every pole  $s$ of $Z\left(s,\chi,\frac{f}{g}\right)$ with $Re(s)>0$  one has  $Re(s)\geq 1/t_{0}$ if $\chi\neq\chi_{triv}$; otherwise $Re(s)=1$ or $Re(s)\geq 1/t_{0}$.
 \end{proposition}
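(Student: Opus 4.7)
The plan is to mirror the proof of Proposition \ref{cotasup}, adapting the chain of inequalities for the positive side. By Proposition \ref{PropPoles}, a pole $s$ of $Z(s,\chi,f/g)$ with $\operatorname{Re}(s)>0$ either equals $1+2\pi\sqrt{-1}k/\ln q$ (only possible when $\chi=\chi_{triv}$), or has real part
\[
\operatorname{Re}(s)=\frac{\sigma(\boldsymbol{w})}{d(\boldsymbol{w},\Gamma(g))-d(\boldsymbol{w},\Gamma(f))}
\]
for some primitive vector $\boldsymbol{w}$ with $d(\boldsymbol{w},\Gamma(g))-d(\boldsymbol{w},\Gamma(f))\neq 0$. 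The assumption $\operatorname{Re}(s)>0$ forces $d(\boldsymbol{w},\Gamma(g))-d(\boldsymbol{w},\Gamma(f))>0$, i.e.\ $\boldsymbol{w}\in T_{+}$.

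The heart of the proof is then the inequality
\[
\frac{1}{t_{0}}\;\leq\;\frac{\sigma(\boldsymbol{w})}{d(\boldsymbol{w},\Gamma(g))+d(\boldsymbol{w},\Gamma(f))}\;\leq\;\frac{\sigma(\boldsymbol{w})}{d(\boldsymbol{w},\Gamma(g))-d(\boldsymbol{w},\Gamma(f))},\qquad \boldsymbol{w}\in T_{+}.
\]
The first inequality is exactly Proposition \ref{Prop3} applied to $\boldsymbol{a}=\boldsymbol{w}$, which yields $\sigma(\boldsymbol{w})t_{0}\geq d(\boldsymbol{w},\Gamma(g))+d(\boldsymbol{w},\Gamma(f))$ (note $d(\boldsymbol{w},\Gamma(g))+d(\boldsymbol{w},\Gamma(f))>0$ because $\boldsymbol{w}\in T_{+}$ implies $d(\boldsymbol{w},\Gamma(g))>d(\boldsymbol{w},\Gamma(f))\geq 0$). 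The second inequality is immediate from $d(\boldsymbol{w},\Gamma(f))\geq 0$, since enlarging the denominator of a positive fraction decreases it. Chaining gives $\operatorname{Re}(s)\geq 1/t_{0}$.

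To finish, I separate by character: when $\chi\neq \chi_{triv}$, only the second family of candidate poles is available in Proposition \ref{PropPoles}, so the chain above immediately yields $\operatorname{Re}(s)\geq 1/t_{0}$; when $\chi=\chi_{triv}$, the only additional possibility is $\operatorname{Re}(s)=1$. I expect no real obstacle here; the only subtlety is to verify that the denominator $d(\boldsymbol{w},\Gamma(g))+d(\boldsymbol{w},\Gamma(f))$ is strictly positive so that the middle fraction makes sense, which follows from $\boldsymbol{w}\in T_{+}$, and to confirm that one can indeed apply Remark~3 of \cite{B-Z-0} inside Proposition \ref{Prop3} as was done in the proof of Proposition \ref{cotasup}.
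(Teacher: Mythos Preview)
Your proposal is correct and follows essentially the same approach as the paper: the paper's proof invokes Propositions \ref{PropPoles} and \ref{Prop3} together with the chain of inequalities
\[
\frac{\sigma(\boldsymbol{w})}{d(\boldsymbol{w},\Gamma(g))-d(\boldsymbol{w},\Gamma(f))}\geq \frac{\sigma(\boldsymbol{w})}{d(\boldsymbol{w},\Gamma(g))+d(\boldsymbol{w},\Gamma(f))}\geq \frac{1}{t_{0}},\qquad \boldsymbol{w}\in T_{+},
\]
which is exactly what you wrote (in reversed order). Your version is in fact slightly more detailed, since you justify why the denominator $d(\boldsymbol{w},\Gamma(g))+d(\boldsymbol{w},\Gamma(f))$ is positive and explicitly separate the cases $\chi=\chi_{triv}$ and $\chi\neq\chi_{triv}$.
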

\begin{proof}
The result follows by using Propositions \ref{PropPoles}, \ref{Prop3} and the following  inequality
 \begin{equation*} \frac{\sigma(\boldsymbol{w})}{d(\boldsymbol{w},\Gamma(g))-d(\boldsymbol{w},\Gamma(f))}\geq \frac{\sigma(\boldsymbol{w})}{d(\boldsymbol{w},\Gamma(g))+d(\boldsymbol{w},\Gamma(f))}\geq \frac{1}{t_{0}}, \quad \boldsymbol{w}\in T_{+}.\end{equation*}
	\end{proof}

\begin{remark}
			Notice that if $d(\boldsymbol{w},\Gamma(f))\neq 0$ and $d(\boldsymbol{w},\Gamma(g))\neq 0$ for all  $\boldsymbol{w}\in\mathcal{D}(t_{0})$, we can conclude that $\pm \frac{1}{t_{0}}$ is not a pole of $Z\left(s,\chi, \frac{f}{g}\right)$.
In effect, if $\pm \frac{1}{t_{0}}$ is a pole of $Z\left(s,\chi, \frac{f}{g}\right)$ then it comes neccesarily from $S_{\Delta}$ for some $\Delta$ in the polyhedral subdivion. Hence if we assume that $\frac{1}{t_{0}}$ is a pole, then by Propositions \ref{PropPoles} and \ref{cotainf} , there exists $\boldsymbol{w}_{i}\in T_{+}$ such that 
	\begin{equation}\label{nonpole}
	\frac{1}{t_{0}}=\frac{\sigma(\boldsymbol{w}_{i})}{d(\boldsymbol{w}_{i},\Gamma(g))-d(\boldsymbol{w}_{i},\Gamma(f))}\geq \frac{\sigma(\boldsymbol{w}_{i})}{d(\boldsymbol{w}_{i},\Gamma(g))+d(\boldsymbol{w}_{i},\Gamma(f))}\geq \frac{1}{t_{0}} .
	\end{equation}
	Then by Proposition \ref{Prop3}, $\boldsymbol{w}_{i}\in \mathcal{D}(t_{0})$ and the inequation (\ref{nonpole}) becomes to an equation, and this implies that  $d(\boldsymbol{w}_{i},\Gamma(f))=0$, which contradicts that $d(\boldsymbol{w}_{i},\Gamma(f))\neq 0$ for any  $\boldsymbol{w}_{i}\in \mathcal{D}(t_{0})$. Similarly,by using Proposition \ref{cotasup} we can show that $-\frac{1}{t_{0}}$ is not a pole of $Z\left(s,\chi, \frac{f}{g}\right)$.
\end{remark}

By this Remark,  it remains that $\pm \frac{1}{t_{0}}$ are possible poles  of $Z\left(s,\chi, \frac{f}{g}\right)$ only for the cases given in Theorems  \ref{Diagonal1} and \ref{Diagonal2}.
 
\begin{theorem}\label{Diagonal1}  Let $f$, $g$ be non-zero polynomials and the character $\chi$  satisfy the conditions of Theorem \ref{Theorem2}. If   $(t_{0},\ldots,t_{0})\in F(\boldsymbol{w},\Gamma(f))$ for some $\boldsymbol{w}\in\mathcal{D}(t_{0})$, it means that   $(t_{0},\ldots,t_{0})$ is the intersection point of the diagonal with the boundary of the Newton polyhedron $\Gamma(f)$. Then   $\beta = -1/t_{0}$ of expected order $\rho$ if $t_{0}\neq1$ and $\chi=\chi_{triv}$ or $\chi\neq \chi_{triv}$; otherwise  $\rho+1$ if  $t_{0}=1$ and $\chi=\chi_{triv}$. Moreover,  if $\chi=\chi_{triv}$ and $t_{0}>1$, then $-1/t_{0}$ is a pole of $Z(s,\chi,\frac{f}{g})$ of order $\rho$.
\end{theorem}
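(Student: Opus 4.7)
The plan is to handle the three assertions (value of $\beta$, expected order, actual order) in turn.

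First I identify $-1/t_0$ as a candidate pole and show $\beta = -1/t_0$. The hypothesis supplies $\boldsymbol{w}\in\mathcal{D}(t_0)$ with $(t_0,\ldots,t_0)\in F(\boldsymbol{w},\Gamma(f))$. Corollary \ref{Cor1} forces $d(\boldsymbol{w},\Gamma(g))=0$, while $\boldsymbol{w}\in\mathcal{D}(t_0)$ together with Proposition \ref{Prop3} (using $\tau_0\subseteq F(\boldsymbol{w},\Gamma(f/g))$) gives $\sigma(\boldsymbol{w})=d(\boldsymbol{w},\Gamma(f))/t_0>0$. Hence $\boldsymbol{w}\in T_-$ and yields the candidate pole $\sigma(\boldsymbol{w})/(d(\boldsymbol{w},\Gamma(g))-d(\boldsymbol{w},\Gamma(f)))=-1/t_0$. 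Combined with the upper bound of Proposition \ref{cotasup}, this gives $\beta=-1/t_0$.

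Next I describe $\mathcal{P}(-1/t_0)$ and read off the expected order. Chaining the inequality $\sigma(\boldsymbol{w})t_0\geq d(\boldsymbol{w},\Gamma(f))+d(\boldsymbol{w},\Gamma(g))$ from Proposition \ref{Prop3} with the trivial inequality $d(\boldsymbol{w},\Gamma(f))+d(\boldsymbol{w},\Gamma(g))\geq d(\boldsymbol{w},\Gamma(f))-d(\boldsymbol{w},\Gamma(g))$, the defining equality of $\mathcal{P}(-1/t_0)$ forces equality in both inequalities, which is equivalent to $\tau_0\subseteq F(\boldsymbol{w},\Gamma(f/g))$ and $d(\boldsymbol{w},\Gamma(g))=0$; that is,
\[
\mathcal{P}(-1/t_0)\;=\;\{\boldsymbol{w}\in\mathcal{D}(t_0)\cap T_-\;:\;d(\boldsymbol{w},\Gamma(g))=0\}.
\]
With $\rho=\rho(-1/t_0)$ the maximal number of generators of a cone $\Delta\in\mathcal{F}(f/g)$ lying in this set, the definition of expected order immediately yields $\rho$ when $-1/t_0\neq -1$ (the case $t_0\neq 1$), and $\rho+1$ when $-1/t_0=-1$ and $\chi=\chi_{triv}$, since the denominator factor $1-q^{-1-s}$ in $L_\Delta(s,\chi_{triv},f/g)$ from Theorem \ref{Theorem2} vanishes at $s=-1$ and is multiplied by $S_\Delta$.

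For the ``moreover'' part, assume $\chi=\chi_{triv}$ and $t_0>1$, so $-1/t_0\in(-1,0)$, in particular $-1/t_0\neq\pm 1$, and all coefficients $L_\Delta(-1/t_0,\chi_{triv},f/g)$ are finite. By Theorem \ref{Theorem2} the pole order of $L_\Delta S_\Delta$ at $s=-1/t_0$ equals the number of generators of $\Delta$ in $\mathcal{P}(-1/t_0)$, at most $\rho$, which is the upper bound. For the matching lower bound I compute $C:=\lim_{s\to -1/t_0}(s+1/t_0)^\rho Z(s,\chi_{triv},f/g)$ as a sum of residue contributions over those $\Delta$ having exactly $\rho$ generators in $\mathcal{P}(-1/t_0)$. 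Because $-1/t_0$ is the largest candidate pole ($-1/t_0>-1$), $\widetilde{\beta}=-1/t_0$ and the integral representation (\ref{integral}) is valid on the interval $(-1/t_0,\widetilde{\alpha})$; positivity of the integrand forces $Z(s,\chi_{triv},f/g)>0$ there, giving $C\geq 0$. A cone-by-cone residue calculation, exploiting that on the $\rho$ relevant generators of each $\Delta$ one has $d(\boldsymbol{w}_i,\Gamma(g))=0$, reduces the sum to the single-polynomial situation of \cite[Proposition 4.6 and Theorem 4.7]{Hoo1}, whose argument shows the sum is strictly positive, so $C>0$ and the order is exactly $\rho$.

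The main obstacle is the non-cancellation in the last step. Since the coefficients $L_\Delta(-1/t_0,\chi_{triv},f/g)$ mix summands of different signs — the contribution $-N_{\Delta,\{g\}}(1-q^{1/t_0})/(1-q^{-1-1/t_0})$ is negative while $\nu(f_\Delta/g_\Delta,\chi_{triv})$ and $-N_{\Delta,\{f\}}(1-q^{1/t_0})/(1-q^{1/t_0-1})$ are nonnegative — a brute sign count fails; the constraint $d(\boldsymbol{w}_i,\Gamma(g))=0$ on the $\rho$ relevant directions is what collapses the $g$-dependent pieces and makes Hoornaert's positivity argument applicable.
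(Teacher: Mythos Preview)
Your derivation of $\beta=-1/t_0$ and of the expected order follows the paper's argument essentially verbatim (Corollary~\ref{Cor1}, Proposition~\ref{Prop3}, Proposition~\ref{cotasup}), with the added bonus of an explicit description of $\mathcal{P}(-1/t_0)$ that the paper does not spell out.

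For the ``moreover'' clause the paper does not argue at all: it simply invokes \cite[Theorem~3]{B-Z-0}, which is the statement that $\beta$ is a pole of order $\rho$ whenever $\beta>-1$. Your attempt at a self-contained residue computation has a genuine gap in its final paragraph. The constraint $d(\boldsymbol{w}_i,\Gamma(g))=0$ on the $\rho$ distinguished generators only simplifies the denominator of $S_\Delta(s)$; it says nothing about the coefficient $L_\Delta(-1/t_0,\chi_{triv},f/g)$. The numbers $N_{\Delta,\{g\}}$ and $N_{\Delta,\{f,g\}}$ depend on the face functions $\overline{f}_{b(\Delta)},\overline{g}_{b(\Delta)}$ over $\mathbb{F}_q$, not on the values $d(\boldsymbol{w}_i,\Gamma(g))$, and there is no reason for them to vanish. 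Consequently the ``collapse to the single-polynomial situation'' does not occur at the level of $L_\Delta$, and Hoornaert's positivity argument is not directly applicable. What is actually needed (and what \cite{B-Z-0} proves; the same estimate is reproduced in the paper's proof of Theorem~\ref{negativepole}, case~(1.b)) is the elementary inequality: for $-1<s<0$ one has $0<\dfrac{1-q^{s}}{1-q^{-1+s}}<1$, while the $N_{\Delta,\{f\}}$ and $N_{\Delta,\{f,g\}}$ terms contribute nonnegatively, so
\[
L_\Delta\!\left(s,\chi_{triv},\tfrac{f}{g}\right)\;>\;q^{-n}\bigl[(q-1)^n-N_{\Delta,\{g\}}\bigr]\;>\;0.
\]
Once this is in hand, your $S_\Delta$-residue reasoning (which \emph{does} legitimately reduce to Hoornaert's computation, since the vanishing factors of the denominator at $s=-1/t_0$ involve only $d(\boldsymbol{w}_i,\Gamma(f))$) completes the proof.
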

\begin{proof}
	Notice that the condition $(t_{0},\ldots,t_{0})\in F(\boldsymbol{w},\Gamma(f))$ implies $d(\boldsymbol{w},\Gamma(g))= 0$ and $d(\boldsymbol{w},\Gamma(f))\neq 0$, see Corollary \ref{Cor1}. Now, by using that  $\boldsymbol{w}\in\mathcal{D}(t_{0})$ and Proposition \ref{Prop3}, it follows that  \[-\frac{1}{t_{0}}=-\frac{\sigma(\boldsymbol{w})}{d(\boldsymbol{w},\Gamma(f))}.\] Hence by definition of the candidate poles  given in Proposition \ref{PropPoles}, $-1/t_{0}$ is a posible real negative part of a pole, thus  $w\in \mathcal{D}_{-}(t_{0}).$ Furthermore, the definition of $\beta$ and Proposition \ref{cotasup} implies that $\beta=-1/t_{0}$. Now, if $-1/t_{0}=-1$ when $\chi=\chi_{triv}$, and if $N_{\Delta,\{f\}}\neq 0$ or $N_{\Delta,\{f,g\}}\neq 0$ in the explicit form of $Z(s,\chi,\frac{f}{g})$, then the expected order is $\rho+1$; otherwise, the expected order is $\rho$.  The second part is exactly the result stablished in \cite[Theorem 3]{B-Z-0} by using that $\beta=-1/t_{0}$.
\end{proof}
\begin{theorem}\label{Diagonal2}
 Let $f$, $g$ be non-zero polynomials and the character $\chi$  satisfy the conditions of Theorem \ref{Theorem2}. Assume that $(t_{0},\ldots,t_{0})\in F(\boldsymbol{w},\Gamma(g))$ for  some $\boldsymbol{w}\in\mathcal{D}(t_{0})$, it means that   $(t_{0},\ldots,t_{0})$ is the intersection point of the diagonal with the boundary of the Newton polyhedron $\Gamma(g)$. Then $\alpha=1/t_{0}$  of expected order $\kappa$ if $t_{0}\neq1$ and $\chi=\chi_{triv}$ or $\chi\neq \chi_{triv}$; otherwise  $\kappa+1$ if  $t_{0}=1$ and $\chi=\chi_{triv}$. Moreover if $\chi=\chi_{triv}$ and  $t_{0}>1$, then $1/t_{0}$ is a pole of $Z\left(s,\frac{f}{g}\right)$ of order $\kappa$.
	\end{theorem}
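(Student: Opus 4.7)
The plan is to mimic the proof of Theorem \ref{Diagonal1} almost verbatim, interchanging the roles of $\Gamma(f)$ and $\Gamma(g)$ and replacing the sign conventions accordingly, and then to obtain the ``moreover'' statement by invoking the functional symmetry $Z(s,\chi,f/g)=Z(-s,\chi^{-1},g/f)$.

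First I would start from the hypothesis $(t_{0},\ldots,t_{0})\in F(\boldsymbol{w},\Gamma(g))$ and apply Corollary \ref{Cor1} to conclude $d(\boldsymbol{w},\Gamma(f))=0$, while necessarily $d(\boldsymbol{w},\Gamma(g))\neq 0$ (otherwise Proposition \ref{Prop3} would force $t_{0}=\infty$). Combining $\boldsymbol{w}\in\mathcal{D}(t_{0})$ with Proposition \ref{Prop3} gives $\sigma(\boldsymbol{w})=d(\boldsymbol{w},\Gamma(g))/t_{0}$, so that
\[
\frac{1}{t_{0}}=\frac{\sigma(\boldsymbol{w})}{d(\boldsymbol{w},\Gamma(g))}=\frac{\sigma(\boldsymbol{w})}{d(\boldsymbol{w},\Gamma(g))-d(\boldsymbol{w},\Gamma(f))},
\]
which by Proposition \ref{PropPoles} exhibits $1/t_{0}$ as a candidate pole and places $\boldsymbol{w}$ in $\mathcal{D}_{+}(t_{0})\subseteq T_{+}$. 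The definition of $\alpha$ as the smallest positive real part of a candidate pole coming from some $S_{\Delta}$, together with Proposition \ref{cotainf}, then pins down $\alpha=1/t_{0}$.

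For the expected order, I would read off the rational form of $Z(s,\chi,f/g)$ from Theorem \ref{Theorem2}. The factor $S_{\Delta}$ contributes a pole of order equal to the number of generators of $\Delta$ that lie in $\mathcal{P}(\alpha)$, and by definition of $\kappa$ this maximal number is $\kappa$. The factors in $L_{\Delta}(s,\chi,f/g)$ involving $1-q^{-1+s}$ arise only when $\chi=\chi_{triv}$ and contribute extra poles precisely at $s=1$, so when $t_{0}=1$ the orders collapse into $\kappa+1$ (when either $N_{\Delta,\{g\}}$ or $N_{\Delta,\{f,g\}}$ is non-zero for some $\Delta$ with $\kappa$ generators in $\mathcal{P}(1/t_{0})$); otherwise the expected order remains $\kappa$. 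This is exactly parallel to the analysis carried out in the proof of Theorem \ref{Diagonal1}.

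Finally, for the ``moreover'' part I would exploit the identity
\[
Z\!\left(s,\chi_{triv},\tfrac{f}{g}\right)=Z\!\left(-s,\chi_{triv},\tfrac{g}{f}\right),
\]
which is immediate from the definition of the integral (\ref{Integral_1}) and the relation $|f/g|^{s}=|g/f|^{-s}$. Since $\Gamma(f/g)=\Gamma((f,g))$ is symmetric in $f$ and $g$, the intersection point $(t_{0},\ldots,t_{0})$ and the number $\kappa$ for $f/g$ coincide with the analogous $t_{0}$ and $\rho$ for $g/f$; moreover the hypothesis $(t_{0},\ldots,t_{0})\in F(\boldsymbol{w},\Gamma(g))$ becomes the hypothesis of Theorem \ref{Diagonal1} applied to $g/f$. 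Therefore, when $t_{0}>1$, Theorem \ref{Diagonal1} (i.e. the cited \cite[Theorem 3]{B-Z-0}) yields that $-1/t_{0}$ is an actual pole of $Z(s,\chi_{triv},g/f)$ of order $\kappa$, hence $1/t_{0}$ is an actual pole of $Z(s,\chi_{triv},f/g)$ of the same order. The main obstacle is to confirm that non-degeneracy and the simplicial subdivision behave symmetrically under $f\leftrightarrow g$ so that the hypotheses of Theorem \ref{Diagonal1} really apply to $g/f$; this should be automatic since the definition of non-degeneracy of $f/g$ is phrased through the polynomial mapping $(f,g)$, which is clearly symmetric in its two entries.
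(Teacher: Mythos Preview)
Your derivation of $\alpha=1/t_{0}$ and of the expected order is exactly what the paper does: it says ``a similar argument given in Theorem \ref{Diagonal1} shows the first part,'' and your write-up spells out precisely that argument with $f$ and $g$ interchanged.

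For the ``moreover'' clause your route diverges from the paper's. The paper simply observes that $\alpha=1/t_{0}$ together with $t_{0}>1$ gives $\alpha<1$, and then invokes \cite[Theorem~4]{B-Z-0} (the positive-side analogue of \cite[Theorem~3]{B-Z-0}) directly to conclude that $1/t_{0}$ is an actual pole of order~$\kappa$. You instead use the involution $Z(s,\chi_{triv},f/g)=Z(-s,\chi_{triv},g/f)$ to transport the question to the negative side and appeal to Theorem~\ref{Diagonal1} (equivalently \cite[Theorem~3]{B-Z-0}) for $g/f$. Your argument is correct: the non-degeneracy hypothesis, the polyhedron $\Gamma((f,g))$, the point $(t_{0},\ldots,t_{0})$, and the simplicial fan are all manifestly symmetric in $(f,g)$, and under the swap one has $T_{+}^{f/g}=T_{-}^{g/f}$, so $\kappa$ for $f/g$ equals $\rho$ for $g/f$. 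The trade-off is that the paper's approach is one line shorter because the positive analogue is already on the shelf, whereas your symmetry argument makes explicit the duality between Theorems~\ref{Diagonal1} and~\ref{Diagonal2} and avoids citing a second external result.
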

\begin{proof}
	A similar argument given in Theorem \ref{Diagonal1} shows the first part of Theorem \ref{Diagonal2}. The second part follows from \cite[Theorem 4]{B-Z-0}, because $\alpha=1/t_{0}$, and $t_{0}>1$ implies $1/t_{0}<1$.
\end{proof}

\begin{example}
	\label{Example1}  Let $f(x,y)=x^{2}-y$,
	$g(x,y)=x^{2}y$ polynomials in ${\mathcal{O}}_{K}[x,y]$. 
	This example is given in Example 1 in  \cite{B-Z-0} and in the same paper, in Example 4, we computed the local zeta function attached to $f/g$ and we showed that its poles have real parts belonging to $\left\{  -1,1/2,1,3/2\right\}$. 
Notice that the intersection point of the diagonal with $\Gamma\left(\frac{f}{g}\right)$ is the point $(2,2)$. Thus, $t_{0}=2$ and  $\mathcal{D}(t_{0})=\{(1,0), (1,2)\}$. Furthermore, the hypothesis of Theorem \ref{Diagonal2} holds: $d((1,0),\Gamma(f))=0$, the intersection point $(2,2)\in F((1,0),\Gamma(g))$, and $t_{0}>1$. Hence, Theorem \ref{Diagonal2} implies that $1/2$ is a pole of $Z\left(s,\frac{f}{g}\right)$ of order $1$.
\end{example}
  
\section{ Positive and negative real poles of  $Z\left(s,\frac{f}{g}\right)$}\label{Sect_7}
In this section we determine some conditions under which $Z\left(s,\frac{f}{g}\right)$ has  a real pole when $f$, $g$ satisfy the conditions of Theorem  \ref{Theorem2}. Furthermore, we obtain information about the largest negative real pole and the smallest positive real pole and they orders, see Theorems \ref{negativepole} and \ref{positivepole}. Also, we notice that if the conditions given in Theorems \ref{Diagonal1} or \ref{Diagonal2} are satisfied then $Z\left(s,\frac{f}{g}\right)$ has always a real pole determined by the intersection point of the diagonal with the boundary of the Newton polyhedron $\Gamma\left(\frac{f}{g}\right)$: $-1/t_{0}$ or $1/t_{0}$, respectively.

\subsection{The largest negative  real pole of $Z\left(s,\frac{f}{g}\right)$}
We remark that if the conditions of Theorem \ref{Diagonal1} are satisfied then   $\beta=-\frac{1}{t_{0}}$, and the results given in Theorem \ref{negativepole} are similar to the ones given in the classical case of local zeta functions attached to one polynomial, see \cite[Theorem 4.10]{Hoo1}.

\begin{theorem}\label{negativepole}
\noindent (1) Assume that  $T_{-}\neq \varnothing$. Then the following hold:
		
		\noindent (1.a) Suppose that $\beta >-1$. Then $\beta$ is the largest real negative pole of $Z\left(s,\frac{f}{g}\right)$ with order $\rho$.
		
	\noindent (1.b) Suppose that  $\beta <-1$. Then if there  exists a cone $\Delta\in \mathcal{F}\left(\frac{f}{g}\right)$ such that $N_{\Delta,\{f\}}\neq 0$ or $N_{\Delta,\{f,g\}}\neq 0$  then $-1$ is the largest real negative pole of $Z\left(s,\frac{f}{g}\right)$ and its order will be 1. Otherwise, $\beta$ will be the largest real negative pole of $Z\left(s,\frac{f}{g}\right)$ and its order will be $\rho$.
		
		\noindent (1.c) if $\beta =-1$, then $\beta$ will be the  largest real negative pole of $Z\left(s,\frac{f}{g}\right)$. If there is a cone $\Delta\in \mathcal{M}_{\rho}\left(\beta\right)$ such that $N_{\Delta,\{f\}}\neq 0$ or $N_{\Delta,\{f,g\}}\neq 0$  then its order will be $\rho+1$. Otherwise, its order will be $\rho$.
		
		\noindent (2)	If $T_{-}=\varnothing$ and $N_{\Delta,\{f\}}\neq \varnothing$ or $N_{\Delta,\{f,g\}}\neq \varnothing$, then $-1$ is the unique real negative pole of $Z\left(s,\frac{f}{g}\right)$ and its  order is $1$.
\end{theorem}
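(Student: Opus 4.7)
The plan is to start from the explicit rational expression given by Theorem \ref{Theorem2}(i), $Z(s,f/g)=\sum_{\Delta}L_{\Delta}(s,f/g)S_{\Delta}(s)$. By inspecting the denominators of this formula, one identifies the only two mechanisms that can produce a negative real pole: the factor $(1-q^{-1-s})^{-1}$ inside each $L_{\Delta}$, which yields the real candidate $s=-1$ and is ``active'' precisely when $N_{\Delta,\{f\}}\neq 0$ or $N_{\Delta,\{f,g\}}\neq 0$; and the factors $(1-q^{-\sigma(\boldsymbol{w}_{i})-(d(\boldsymbol{w}_{i},\Gamma(f))-d(\boldsymbol{w}_{i},\Gamma(g)))s})^{-1}$ of each $S_{\Delta}$ indexed by generators $\boldsymbol{w}_{i}\in T_{-}$, whose contributions to the negative real poles are bounded above by $\beta$ by definition of the latter. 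Part (2) then follows at once: if $T_{-}=\varnothing$, no $S_{\Delta}$-factor yields a negative real pole, so the only candidate is $s=-1$, which is genuinely present iff some $N_{\Delta,\{f\}}$ or $N_{\Delta,\{f,g\}}$ is nonzero, and it is automatically simple because $(1-q^{-1-s})^{-1}$ contributes only multiplicity one.

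The main analytic ingredient for the order statements is the integral representation from Theorem \ref{Theorem2}(ii): on the strip of holomorphy $\widetilde{\beta}<\operatorname{Re}(s)<\widetilde{\alpha}$, $Z(s,f/g)$ equals the absolutely convergent integral $\int_{\mathcal{O}_{K}^{n}\setminus D_{K}}|f(x)/g(x)|_{K}^{s}\,|dx|_{K}$, whose integrand is nonnegative on the real axis and whose Haar measure is positive. A $\pi$-adic analogue of Landau's theorem for such Mellin-type integrals then implies that the left endpoint $\widetilde{\beta}$ is a genuine singularity of the meromorphic continuation on the real line, and that if $k$ is its order the leading Laurent coefficient $\lim_{s\downarrow\widetilde{\beta}}(s-\widetilde{\beta})^{k}Z(s,f/g)$ is strictly positive.

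With these two ingredients, each subcase of (1) is settled by identifying $\widetilde{\beta}$ and matching it to the expected order of the pole read off from the explicit formula. In (1.a), $\beta>-1$ forces the $L_{\Delta}$-mechanism to contribute singularities only at $\operatorname{Re}(s)=-1<\beta$, so $\widetilde{\beta}=\beta$ and the expected order is $\rho$, coming from the cones in $\mathcal{M}_{\rho}(\beta)$. In (1.b), with $\beta<-1$: if some $N_{\Delta,\{f\}}$ or $N_{\Delta,\{f,g\}}$ is nonzero then $(1-q^{-1-s})^{-1}$ actually contributes a singularity at $-1$, whence $\widetilde{\beta}=-1$ is a simple pole; otherwise the $L_{\Delta}$-mechanism is absent altogether and $\widetilde{\beta}=\beta$ becomes the leftmost pole, of expected order $\rho$. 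In (1.c) the two mechanisms coalesce at $-1=\beta$: $S_{\Delta}$ already yields expected order $\rho$ on cones in $\mathcal{M}_{\rho}(\beta)$, and a simple extra factor from $L_{\Delta}$ raises this to $\rho+1$ exactly when some such cone satisfies $N_{\Delta,\{f\}}\neq 0$ or $N_{\Delta,\{f,g\}}\neq 0$.

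The hard step, on which the whole argument rests, is ruling out accidental cancellation between the finitely many cone contributions so that the expected order above coincides with the actual order. The plan here is to write the coefficient of the highest expected negative power of $(s-\widetilde{\beta})$ as an explicit finite sum of products $L_{\Delta}(\widetilde{\beta})\cdot\operatorname{Res}_{s=\widetilde{\beta}}S_{\Delta}(s)$ (or the appropriate higher-order Laurent data when $\widetilde{\beta}=-1$), compute each residue by direct differentiation of the $S_{\Delta}$-product (noting that the factors indexed by $\boldsymbol{w}_{i}\in\mathcal{P}(\widetilde{\beta})$ each contribute a factor of the form $(\ln q)^{-1}$ times a positive quantity), and then invoke the Landau-type positivity of paragraph two to argue that this finite sum has a definite nonzero sign. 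This nonvanishing conclusion delivers the orders $\rho$, $1$ and $\rho+1$ claimed in the respective subcases of (1), and together with the analysis of part (2) completes the proof of Theorem \ref{negativepole}.
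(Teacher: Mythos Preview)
Your overall framework---start from the explicit formula of Theorem~\ref{Theorem2}, list the two mechanisms producing negative real candidate poles, and then analyze residues cone by cone---is exactly the paper's. The difference lies in how you certify that the \emph{expected} order equals the \emph{actual} order.

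You propose a Landau-type argument: since $Z(s,f/g)$ is given on the holomorphy strip by an integral with nonnegative integrand, the left boundary $\widetilde{\beta}$ must be a genuine singularity with positive leading Laurent coefficient. This is correct as far as it goes, but it only tells you that $\widetilde{\beta}$ is a pole of \emph{some} order $k\ge 1$ and that the coefficient of $(s-\widetilde{\beta})^{-k}$ is positive. It does \emph{not} tell you that $k$ equals the expected order $\rho$ (or $\rho+1$, or $1$). Your last paragraph then says you will ``invoke the Landau-type positivity \dots\ to argue that this finite sum has a definite nonzero sign,'' meaning the sum of the order-$\rho$ residue contributions. That inference is circular: Landau speaks about the coefficient at the \emph{actual} order, which is what you are trying to determine, not about the coefficient at the expected order.

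The paper closes this gap without any Landau-type input, by a direct sign computation at the expected order. For instance, in case~(1.b) with some $N_{\Delta,\{f\}}$ or $N_{\Delta,\{f,g\}}$ nonzero, one shows for every cone $\Delta$ that
\[
\operatorname{Res}(\Delta,-1):=\lim_{s\to -1}(1-q^{-1-s})\,L_{\Delta}(s,f/g)\,S_{\Delta}(s)\ \ge\ 0,
\]
by checking separately that $S_{\Delta}(-1)>0$ (here $\beta<-1$ forces each denominator factor of $S_{\Delta}$ to be positive at $s=-1$) and that $\lim_{s\to-1}(1-q^{-1-s})L_{\Delta}(s,f/g)\ge 0$ via the elementary estimate $(q-1)^{n}>N_{\Delta,\{g\}}$. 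One then exhibits a specific cone $\Delta_{0}$ for which the inequality is strict. The ``otherwise'' branch and case~(1.c) are handled the same way, with the factor $(1-q^{s-\beta})^{\rho}$ and an explicit cone $\Delta_{0}\in\mathcal{M}_{\rho}(\beta)$. You gesture at this computation (``each contribute a factor of the form $(\ln q)^{-1}$ times a positive quantity''), which is the right idea; what is missing is simply to carry it through and to produce the strict-inequality witness, rather than deferring the nonvanishing conclusion to Landau.
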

\begin{proof} First we proof Part (1).
	Case (1.a) is given in \cite[Theorem 3]{B-Z-0}. For case (1.b) we recall that 
	
\[
Z\left(s,\frac{f}{g}\right)=\sum_{\text{$\Delta\in\mathcal{F}(\frac{f}{g})\cup$%
	}\left\{  \mathbf{0}\right\}  }L_{\Delta}\left(s,\frac{f}{g}\right)S_{\Delta}(s),
\]
 $\Delta\in\mathcal{F}\left(\frac{f}{g}\right)\cup$$\left\{  \mathbf{0}\right\}$, 
\begin{align*}
L_{\Delta}\left(s,\frac{f}{g}\right)&=
q^{-n}\left[  (q-1)^{n}-N_{\Delta,\left\{
	f\right\}  }\frac{1-q^{-s}}{1-q^{-1-s}}-N_{\Delta,\left\{  g\right\}  }%
\frac{1-q^{s}}{1-q^{-1+s}}\right. \\
&\left. -N_{\Delta,\left\{  f,g\right\}  }\frac{(1-q^{-s})(1-q^{s}%
	)}{q(1-q^{-1-s})(1-q^{-1+s})}\right]
\end{align*}

where $S_{\boldsymbol{0}}(s)=1$, and if
$\Delta\in\mathcal{F}\left(\frac{f}{g}\right)$ is a cone strictly positively generated by
linearly independent vectors $\boldsymbol{w}_{1},\ldots,\boldsymbol{w}_{l}%
\in\mathcal{D}(\frac{f}{g})$, then
\[
S_{\Delta}(s)=\frac{\sum_{\boldsymbol{t}}q^{-\sigma(\boldsymbol{t}
		)-(d(\boldsymbol{t},\Gamma(f))-d(\boldsymbol{t},\Gamma(g))){s}}}{\prod
	_{i=1}^{l}(1-q^{-\sigma(\boldsymbol{w}_{i})-(d(\boldsymbol{w}_{i}%
		,\Gamma(f))-d(\boldsymbol{w}_{i},\Gamma(g))){s}})},
\]
In order to proof the first part of case (1.b) we assume that there exist a cone $\Delta_{0}\in \mathcal{F}\left(\frac{f}{g}\right)$ such that $N_{\Delta_{0},\{f\}}\neq 0$ or $N_{\Delta_{0},\{f,g\}}\neq 0$. 

To prove that $-1$ is a pole of $Z\left(s,\frac{f}{g}\right)$ of order $1$, it is sufficient to show that
\[Res\left(\Delta,-1\right):=\lim_{s\rightarrow-1
}(1-q^{-1-s})L_{\Delta}\left(s,\frac{f}{g}\right)S_{\Delta}(s)\geq0\] for every cone $\Delta\in\mathcal{F}(\frac{f}{g})$ and  $Res\left(
\Delta_{0},-1\right) \allowbreak>0$.

Notice that 
\begin{equation}
\lim_{s\rightarrow -1}S_{\Delta}\left(  s\right)  >0
\label{calculo de residuo2}%
\end{equation}
for all cones $\Delta\in\mathcal{F}(\frac{f}{g})\cup\left\{  \mathbf{0}%
\right\}  $. Inequality (\ref{calculo de residuo2}) follows from 
\[
\lim_{s\rightarrow-1}\sum_{\boldsymbol{t}}q^{-\sigma(\boldsymbol{t}%
	)-(d(\boldsymbol{t},\Gamma(f))-d(\boldsymbol{t},\Gamma(g))){s}}>0,
\]
and
\[
1-q^{-\sigma(\boldsymbol{w}_{i})-(d(\boldsymbol{w}_{i},\Gamma
	(f))-d(\boldsymbol{w}_{i},\Gamma(g)))(-1)}>0
\]
because $\beta <-1$ implies $-\sigma(\boldsymbol{w}_{i})-(d(\boldsymbol{w}_{i},\Gamma
(f))-d(\boldsymbol{w}_{i},\Gamma(g)))(-1)<0$ for any $\boldsymbol{w}%
_{i}\in T_{+}\cup T_{-}$. From these observations, we
have%
\begin{gather*}
\lim_{s\rightarrow-1}\frac{\sum_{\boldsymbol{t}}q^{-\sigma(\boldsymbol{t}
		)-(d(\boldsymbol{t},\Gamma(f))-d(\boldsymbol{t},\Gamma(g))){s}}}{\prod
	_{i=1}^{l}(1-q^{-\sigma(\boldsymbol{w}_{i})-(d(\boldsymbol{w}_{i}%
		,\Gamma(f))-d(\boldsymbol{w}_{i},\Gamma(g))){s}})}>0.
\end{gather*}
Now, we prove that
\begin{equation}\label{ecuacion2}
 \lim_{s\rightarrow-1}(1-q^{-1-s})L_{\Delta}\left(s,\frac{f}{g}\right)\geq0.
\end{equation}
By definition of $L_{\Delta}\left(s,\frac{f}{g}\right)$, for each cone such that $N_{\Delta,\{f\}}=0=N_{\Delta,\{f,g\}}$, we have \[ \lim_{s\rightarrow-1}(1-q^{-1-s})L_{\Delta}\left(s,\frac{f}{g}\right)=0\]
On the other hand, we assume that $N_{\Delta,\{f\}}\neq 0$ and $N_{\Delta,\{f,g\}}\neq 0$. The other cases when one of these is zero are treated in a similar way.  Hence
\begin{align*}
 \lim_{s\rightarrow-1}(1-q^{-1-s})L_{\Delta}\left( s,\frac{f}{g}\right)  &>q^{-n}\left( (q-1)^{n}+N_{\Delta
	,\left\{  f\right\}  }(q-1)\right.\\
&\left.-N_{\Delta,\left\{  g\right\}  }+N_{\Delta,\left\{
	f,g\right\}  }\left(1-q^{-1}\right)\right)>0.
\end{align*}
In particular, $Res\left(
\Delta_{0},-1\right) \allowbreak>0$. This shows that $-1$ is a pole of order $1$.

Now,  the second part of case (1.b)  and $\beta<0$ implies that 

\[L_{\Delta}\left(\beta,\frac{f}{g}\right)=q^{-n}\left[ (q-1)^{n}-N_{\Delta,\left\{  g\right\}  }\frac{1-q^{\beta}}{1-q^{-1+\beta}}\right] 
>q^{-n}\left[ (q-1)^{n}-N_{\Delta,\left\{  g\right\}  }\right]>0. \] Thus, it is sufficient to prove that 
$\lim_{s\rightarrow\beta
}(1-q^{s-\beta})^{\rho}S_{\Delta}(s)\geq0$\textbf{
}for every cone $\Delta\in\mathcal{F}(\frac{f}{g})$ and that there exists
a cone $\Delta_{0}\in\mathcal{M}_{\rho}(\beta)$ such that $Res\left(
\Delta_{0},\beta\right)  \allowbreak>0$.

We show that\ for at least one cone $\Delta_{0}$ in $\mathcal{M}_{\rho}%
(\beta)$, $Res\left(  \Delta_{0},\beta\right)  \allowbreak>0$, because for any
cone $\Delta\notin\mathcal{M}_{\rho}(\beta)$, $Res\left(  \Delta,\beta\right)
=0$. This last assertion can be verified by using the argument that we give
for the cones in $\mathcal{M}_{\rho}(\beta)$. We first note that there exists
at least one cone $\Delta_{0}$ in $\mathcal{M}_{\rho}(\beta)$. Let
$\boldsymbol{w}_{1},\ldots,\boldsymbol{w}_{\rho},\allowbreak\boldsymbol{w}%
_{\rho+1},\ldots,\boldsymbol{w}_{l}$ its generators with $\boldsymbol{w}%
_{i}\in\mathcal{P}(\beta)\Leftrightarrow1\leq i\leq\rho$.

We notice that
\[
\lim_{s\rightarrow\beta}\sum_{\boldsymbol{t}}q^{-\sigma(\boldsymbol{t}%
	)-(d(\boldsymbol{t},\Gamma(f))-d(\boldsymbol{t},\Gamma(g))){s}}>0.
\]
Hence in order to show that $Res\left(  \Delta_{0},\beta\right)
\allowbreak>0$, it is sufficient to show that
\[
\lim_{s\rightarrow\beta}\frac{(1-q^{s-\beta})^{\rho}}{\prod_{i=1}%
	^{l}(1-q^{-\sigma(\boldsymbol{w}_{i})-(d(\boldsymbol{w}_{i},\Gamma
		(f))-d(\boldsymbol{w}_{i},\Gamma(g))){s}})}>0.
\]
Now, notice there are positive integer constants $c_{i}$ such that
\begin{gather*}
\prod_{i=1}^{\rho}(1-q^{-\sigma(\boldsymbol{w}_{i})-(d(\boldsymbol{w}%
	_{i},\Gamma(f))-d(\boldsymbol{w}_{i},\Gamma(g))){s}})=\prod_{i=1}^{\rho
}(1-q^{\left(  {s}-\beta\right)  c_{i}})\\
=(1-q^{s-\beta})^{\rho}\prod_{i=1}^{\rho}\text{ }\prod\limits_{\varsigma
	^{c_{i}}=1,\varsigma\neq1}\left(  1-\varsigma q^{s-\beta}\right)  .
\end{gather*}
In addition, for $i=\rho+1,\ldots,l$,
\[
1-q^{-\sigma(\boldsymbol{w}_{i})-(d(\boldsymbol{w}_{i},\Gamma
	(f))-d(\boldsymbol{w}_{i},\Gamma(g))){\beta}}>0
\]
because $-\sigma(\boldsymbol{w}_{i})-(d(\boldsymbol{w}_{i},\Gamma
(f))-d(\boldsymbol{w}_{i},\Gamma(g))){\beta<0}$ for any $\boldsymbol{w}%
_{i}\in T_{+}\cup T_{-}$ with $i=\rho+1,\ldots,l$. From these observations, we
have%
\begin{gather*}
\lim_{s\rightarrow\beta}\frac{(1-q^{s-\beta})^{\rho}}{\prod_{i=1}%
	^{l}(1-q^{-\sigma(\boldsymbol{w}_{i})-(d(\boldsymbol{w}_{i},\Gamma
		(f))-d(\boldsymbol{w}_{i},\Gamma(g))){s}})}=\\
\lim_{s\rightarrow\beta}\frac{(1-q^{s-\beta})^{\rho}}{(1-q^{s-\beta})^{\rho
	}\prod_{i=1}^{\rho}\text{ }\prod\limits_{\varsigma^{c_{i}}=1,\varsigma\neq
		1}\left(  1-\varsigma q^{s-\beta}\right)  }\times\\
\lim_{s\rightarrow\beta}\frac{1}{\prod_{i=\rho+1}^{l}(1-q^{-\sigma
		(\boldsymbol{w}_{i})-(d(\boldsymbol{w}_{i},\Gamma(f))-d(\boldsymbol{w}%
		_{i},\Gamma(g))){s}})}>0.
\end{gather*}
	The case (1.c) and the proof of part (2) are similar to the case (1.b) and the first case of (1.b), respectively.
	\end{proof}

\subsection{The smallest positive  real pole of $Z\left(s,\frac{f}{g}\right)$}
The following theorem is the `posivite' counterpart of the classical theory of local zeta functions attached to one polynomial.
\begin{theorem}\label{positivepole}
\noindent (1') Assume that  $T_{+}\neq \varnothing$. Then the following hold:
	
	\noindent (1.a') Suppose that $\alpha <1$. Then $\alpha$ is the smallest real positive pole of $Z\left(s,\frac{f}{g}\right)$ with order $\kappa$. 
	
	\noindent (1.b') Suppose that  $\alpha >1$. Then if there  exists a cone $\Delta\in \mathcal{F}\left(\frac{f}{g}\right)$ such that $N_{\Delta,\{g\}}\neq 0$ or $N_{\Delta,\{f,g\}}\neq 0$  then $1$ is the smallest real positive pole of $Z\left(s,\frac{f}{g}\right)$ of order 1. Otherwise, $\alpha$ will be the smallest real positive pole of $Z\left(s,\frac{f}{g}\right)$ and its order will be $\kappa$.
	
	\noindent (1.c') if $\alpha =1$, then $\alpha$ will be the  smallest real positive pole of $Z\left(s,\frac{f}{g}\right)$. If there is a cone $\Delta\in \mathcal{F}\left(\frac{f}{g}\right)$ such that $N_{\Delta,\{g\}}\neq 0$ or $N_{\Delta,\{f,g\}}\neq 0$  then its order will be $\kappa+1$. Otherwise, its order will be $\kappa$.
	
	\noindent (2')	If $T_{+}=\varnothing$ and $N_{\Delta,\{g\}}\neq \varnothing$ or $N_{\Delta,\{f,g\}}\neq \varnothing$, then $1$ is the unique real positive pole of $Z\left(s,\frac{f}{g}\right)$ and its  order is $1$.
\end{theorem}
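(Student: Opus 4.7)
The plan is to mirror the proof of Theorem \ref{negativepole} with the roles of $T_{-}$ and $T_{+}$, of $\beta$ and $\alpha$, and of $\rho$ and $\kappa$ interchanged, and with the factor $(1-q^{-s})/(1-q^{-1-s})$ (whose pole at $-1$ drove the negative-side analysis) replaced by $(1-q^{s})/(1-q^{-1+s})$ (whose pole at $1$ drives the positive side). From the explicit formula in Theorem \ref{Theorem2}, every positive real pole of $Z\left(s,\frac{f}{g}\right)$ arises either from the factor $(1-q^{-1+s})^{-1}$ appearing in $L_{\Delta}$ whenever $N_{\Delta,\{g\}}\neq 0$ or $N_{\Delta,\{f,g\}}\neq 0$, or from a factor $(1-q^{-\sigma(\boldsymbol{w}_{i})-(d(\boldsymbol{w}_{i},\Gamma(f))-d(\boldsymbol{w}_{i},\Gamma(g)))s})^{-1}$ in $S_{\Delta}$ for some $\boldsymbol{w}_{i}\in T_{+}$, yielding a candidate whose real part is at least $\alpha$.

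First I would dispose of case (1.a') by invoking \cite[Theorem 4]{B-Z-0}: since $\alpha<1$, no cone produces cancellation between the $S_{\Delta}$-candidates and the $L_{\Delta}$-pole at $s=1$, and the residue computation there yields $\alpha$ as a pole of order $\kappa$. For case (1.b'), since $\alpha>1$, for every generator $\boldsymbol{w}_{i}\in T_{+}\cup T_{-}$ the exponent $-\sigma(\boldsymbol{w}_{i})-(d(\boldsymbol{w}_{i},\Gamma(f))-d(\boldsymbol{w}_{i},\Gamma(g)))\cdot 1$ is strictly negative, so every factor in the denominator of $S_{\Delta}(1)$ is positive and $S_{\Delta}(1)>0$; then it suffices to evaluate $\lim_{s\to 1}(1-q^{-1+s})L_{\Delta}(s,f/g)$ and check that it is always $\geq 0$ and strictly positive exactly when $N_{\Delta,\{g\}}\neq 0$ or $N_{\Delta,\{f,g\}}\neq 0$, via the direct estimate used in (1.b) of Theorem \ref{negativepole}: the $(q-1)^{n}$ term dominates and the $N_{\Delta,\{f\}}$ contribution vanishes in the limit. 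Case (2') proceeds identically: $T_{+}=\varnothing$ gives $S_{\Delta}(1)>0$ for every $\Delta$, and then $s=1$ is a simple pole precisely when some $N_{\Delta,\{g\}}$ or $N_{\Delta,\{f,g\}}$ is nonzero, and it is the only positive real pole because no $S_{\Delta}$ contributes candidates in $\operatorname{Re}(s)>0$.

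The delicate case is (1.c'), where $\alpha=1$ and the two sources of singularity coincide. Here I would factor the denominator of $S_{\Delta}$ corresponding to the $\kappa$ generators in $\mathcal{P}(\alpha)$ as
\[
\prod_{i=1}^{\kappa}(1-q^{(1-s)c_{i}})=(1-q^{-1+s})^{\kappa}\prod_{i=1}^{\kappa}\prod_{\varsigma^{c_{i}}=1,\,\varsigma\neq 1}(1-\varsigma q^{s-1})
\]
for positive integers $c_{i}$, and then compute $\lim_{s\to 1}(1-q^{-1+s})^{\kappa+1}L_{\Delta}(s,f/g)S_{\Delta}(s)$ for a representative cone $\Delta_{0}\in\mathcal{M}_{\kappa}(\alpha)$ carrying $N_{\Delta_{0},\{g\}}\neq 0$ or $N_{\Delta_{0},\{f,g\}}\neq 0$. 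The main obstacle will be positivity of this combined residue: one must verify that $\lim_{s\to 1}(1-q^{-1+s})L_{\Delta}(s,f/g)$ has the right sign, that the factors associated to the nontrivial $c_{i}$-th roots of unity together with the generators outside $\mathcal{P}(\alpha)$ give a positive real limit, and that cones $\Delta\notin\mathcal{M}_{\kappa}(\alpha)$ do not cancel the residue because $(1-q^{-1+s})^{\kappa+1}$ kills their strictly lower order of pole. The same sign arithmetic used for $\beta$ in Theorem \ref{negativepole} will then yield the claimed expected order, $\kappa+1$ when a qualifying $N_{\Delta,\{g\}}$ or $N_{\Delta,\{f,g\}}$ is present and $\kappa$ otherwise.
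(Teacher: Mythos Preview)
Your proposal is correct and follows exactly the strategy the paper uses: the paper's own proof simply cites \cite[Theorem 4]{B-Z-0} for (1.a') and declares the remaining cases ``similar to cases (1.b), (1.c) and (2) in Theorem \ref{negativepole}'', which is precisely the mirroring you carry out in detail. One small slip: in your factorization for (1.c') the exponent should be $(s-1)c_{i}$ rather than $(1-s)c_{i}$, since for $\boldsymbol{w}_{i}\in\mathcal{P}(\alpha)$ with $\alpha=1$ the denominator factor is $1-q^{-\sigma(\boldsymbol{w}_{i})+(d(\boldsymbol{w}_{i},\Gamma(g))-d(\boldsymbol{w}_{i},\Gamma(f)))s}=1-q^{c_{i}(s-1)}$; this does not affect the argument.
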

\begin{proof}
	The case (1.a') is well known, see \cite[Theorem 4]{B-Z-0}. The other cases are similar to cases (1.b), (1.c) and (2) in Theorem \ref{negativepole}.
	\end{proof}

\begin{example}
	We consider a simple case $f(x,y)=x$,
	$g(x,y)=y$ in ${\mathcal{O}}_{K}[x,y]$. We notice that $f/g$ is non-degerate  over $\mathbb{F}_{q}$ with respect to $\Gamma\left(\frac{f}{g}\right)$ due to  the subset defined in (\ref{Condition_1}) is empty for any $\boldsymbol{k}\in\mathbb{R}^{n}_{+}$. In this case, $t_{0}=1$,  $\mathcal{D}(t_{0})=\{(1,0),(0,1)\}$, and $(1,1)\in F((1,0),\Gamma(f))$ and $(1,1)\in F((0,1),\Gamma(g))$,  then by Theorems \ref{Diagonal1}, \ref{Diagonal2}, \ref{negativepole} and \ref{positivepole},  $\pm 1$ are the unique poles of $Z\left(s,\frac{f}{g}\right)$ and they orders are $1$.
\end{example}

\begin{remark}\label{Char} It follows by definition of $d(\boldsymbol{k},\Gamma)$ that
	\begin{itemize}
		\item[i)] if $\Gamma(g)\subset \Gamma(f)$, then $T_{-}=\varnothing$, then -1 could be the only negative pole of $Z\left(s,\frac{f}{g}\right)$, see (2) in Theorem \ref{negativepole}, and its smallest positive real pole is given by Theorem \ref{positivepole},
		\item[ii)] if $\Gamma(f)\subset \Gamma(g)$, then $T_{+}=\varnothing$, then 1 could be the only positive pole of $Z\left(s,\frac{f}{g}\right)$, see (2') in Theorem \ref{positivepole}, and its largest negative real pole is given by Theorem \ref{negativepole},
		\item[iii)] if $\Gamma(f)=\Gamma(g)$, then $T_{+}=T_{-}=\varnothing$. Thus the only real poles of $Z\left(s,\frac{f}{g}\right)$ could be the trivial ones -1 or 1. 
		\item[iv)] if $\Gamma(f)=\Gamma(g)$ and $N_{\Delta,\{f\}}=N_{\Delta,\{g\}}=N_{\Delta,\{f,g\}}=0$ for any $\Delta\in\mathcal{F}\left(\frac{f}{g}\right)\cup\{\boldsymbol{0}\}$. Then  $Z\left(s,\frac{f}{g}\right)$ does not have poles. 
	\end{itemize}	
	
\end{remark}

\begin{example}
	\label{Example2}  Let $f(x,y)=x^{2}+y^{2}$,
	$g(x,y)=x^{4}+y^{4}$ be two polynomials in $\mathbb{Q}_{p}[x,y]$, and we assume that $-1$ is not a square in $\mathbb{Q}_{p}$, i. e., $p\equiv3\bmod 4$.  A simplicial
	polyhedral subdivision $\mathcal{F}\left(\frac{f}{g}\right)$ subordinate to $\Gamma(\frac{f}{g})$ is given by
\end{example}
\begin{center}%
	\begin{table}[h!]
		\begin{tabular}
			[c]{|l|l|l|}\hline
			Cone & $f_{b(\Delta)}$ & $g_{b(\Delta)}$\\\hline
			$\Delta_{1}:=(1,0){\mathbb{R}}_{>0}$ & $y^{2}$ & $y^{4}$\\\hline
			$\Delta_{2}:=(1,0){\mathbb{R}}_{>0}+(1,1){\mathbb{R}}_{>0}$ & $y^{2}$ & $y^{4}$\\\hline
			$\Delta_{3}:=(1,1){\mathbb{R}}_{>0}$ & $x^{2}+y^{2}$ & $x^{4}+y^{4}$\\\hline
			$\Delta_{4}:=(1,1){\mathbb{R}}_{>0}+(0,1){\mathbb{R}}_{>0}$ & $x^{2}$ &
			$x^{4}$\\\hline
			$\Delta_{5}:=(0,1){\mathbb{R}}_{>0}$ & $x^{2}$ & $x^{4}$\\\hline
		\end{tabular}\caption{Cones and face functions}
	\end{table}
\end{center}
\noindent where ${\mathbb{R}}_{>0}:={\mathbb{R}}_{+}\smallsetminus\left\{
\boldsymbol{0}\right\}  $. Notice that for every $\boldsymbol{k}\in
\mathbb{R}_{+}^{n}\smallsetminus(\left\{  \boldsymbol{0}\right\}  \cup
\Delta_{3})$ and every non-empty subset $I\subseteq\left\{  1,2\right\}  $,
the subset defined in (\ref{Condition_1}) is empty, thus (\ref{Condition_2})
is always satisfied. In the case $\boldsymbol{k}=\boldsymbol{0}$ and
$\boldsymbol{k}\in\Delta_{3}$, $f_{\boldsymbol{k}}=x^{2}+y^{2}$,
$g_{\boldsymbol{k}}=x^{4}+y^{4}$, the conditions (\ref{Condition_1}%
)-(\ref{Condition_2}) are also verified because of $-1$ is not a square in $\mathbb{Q}_{p}$.  Hence $f/g$ is
non-degenerate over $\mathbb{F}_{q}$ with respect to $\Gamma\left(
\frac{f}{g}\right) $ and $L_{\Delta}=q^{-2}(q-1)^{2}$ for any $\Delta\in\mathcal{F}\left(\frac{f}{g}\right)\cup\{\boldsymbol{0}\}$.  Notice that $\Gamma(g)\subset\Gamma(f)$, then Remark \ref{Char} i) implies that $\alpha=1$ is the smallest real positive pole of $Z\left(s,\frac{f}{g}\right)$ of order $1$. Moreover, it is the unique pole of the local zeta function, because of $d((1,0),\Gamma(f))=d((0,1),\Gamma(f))=0$ and $d((1,0),\Gamma(g))=d((0,1),\Gamma(g))=0$.

\begin{example}
If in the Example \ref{Example2} we consider $f(x,y)=x^{4}+y^{4}$ and $g(x,y)=x^{2}+y^{2}$, then  $\Gamma(f)\subset \Gamma(g)$, then as above we can show  that $-1$ is the only pole of $Z\left(s,\frac{f}{g}\right)$ and its order is $1$. 
\end{example}

\end{document}